\newtheorem{theorem}{Theorem}[section]
\newtheorem{lemma}[theorem]{Lemma}
\newtheorem{proposition}[theorem]{Proposition}
\newtheorem{corollary}[theorem]{Corollary}
\newtheorem{preexample}{Example}[section]
\newtheorem{preremark}{Remark}
\newenvironment{remark}{\begin{preremark}\rm}{\end{preremark}}
\newenvironment{proof}
{{\bf Proof:}}
{\qquad \hspace*{\fill} $\Box$}%
\newcommand{\fa}{\mathfrak{a}}%
\newcommand{\fg}{\mathfrak{g}}%
\newcommand{\fk}{\mathfrak{k}}%
\newcommand{\fn}{\mathfrak{n}}%
\newcommand{\Ad}{\operatorname{Ad}}%
\newcommand{\ad}{\operatorname{ad}}%
\newcommand{\Sl}{\operatorname{Sl}}%
\newcommand{\DC}{\mathcal{D}}%
\newcommand{\XC}{\mathcal{X}}%
\newcommand{\R}{\mathbb{R}}%
\newcommand{\Z}{\mathbb{Z}}%
\begin{document}

\title{Periodic orbits of Linear or invariant flows on connected Lie groups}%: the solution, stability, conjugacy, and an application on the 3-dimensional case}

\author{S. N. Stelmastchuk\\ Universidade Federal do Paran\'{a}\\Jandaia do Sul, Brazil
\footnote{{\bf AMS 2010 subject classification:} 22E46 , 34A05, 34D20, 37B99},
\footnote{{\bf Keywords:} stability, periodic orbits, linear flows, semisimple Lie groups}
\footnote{{\bf e-mail: }simnaos@gmail.com}
}

\maketitle

\begin{abstract}
	Our main is to study periodic orbits of linear or invariant flows on a real, connected Lie group. Since each linear flow $\varphi_t$ has a derivation associated $\DC$, we show that the existence of periodic orbits of $\varphi_t$ is based on the eigenvalues of the derivation $\DC$. From this, we study periodic orbits of a linear flow on noncompact, semisimple Lie groups, and we work with periodic orbits of a linear flow on connected, simply connected, solvable Lie groups of dimension 2 or 3. 
\end{abstract}

\textbf{Keywords: } periodic orbits, linear flow, connected Lie groups.

\textbf{AMS 2010 subject classification}: 37C27,  37C10, 22E99

\section{Introduction}

The study of periodic orbits is a recurrent issue. In sprayed literature, many theory and techniques are presented to find and classify periodic orbits. For a fuller treatment the reader we refer the reader to  \cite{chicone}, \cite{colonius}, and \cite{robinson} among others. Our main is to study periodic orbits of the linear and invariant flows on real, connected Lie groups. Following we introduce our approach.

Let $G$ be a real, connected Lie group. Let us denote by $\fg$ its Lie algebra. We recall that a vector field $\XC$ on $G$ is called linear if its flow $(\varphi_t)_{t \in \R}$ is a family of automorphism of the Lie group $G$. Namely, the linear flow  $\varphi_t$ is the solution of dynamical system 
\begin{equation}\label{odelinear}
	\dot{g} = \XC(g),\, g \in G.
\end{equation}

In \cite{stelmastchuk}, the author study periodic orbits of a linear flow on a real, connected, compact, semisimple Lie group. Our wish is to extend the study of periodic orbits of a linear flow (\ref{odelinear}) to a real, connected Lie group $G$. 

An important fact is that the linear vector field $\XC$ yields the derivation $\DC = - \ad(\XC)$ of the Lie algebra $\fg$. Our idea is to associated the eigenvalues of $\DC$ with existence of periodic orbits. This idea comes from the study of periodic orbits of linear dynamical system on Euclidian spaces (see for instance \cite{colonius}). Here, care is needed because for every linear vector field $\XC$ we have a derivation $\DC$ associated, however the contrary it is not true (see \cite{sanmartin} for more detail). 

Our main result is that an orbit that is not a fixed point of a linear flow $\varphi_t$ is periodic if the eigenvalues of the derivation $\DC$ which are not null are $\pm \alpha_1 \operatorname{i}, \ldots, \pm \alpha_r \operatorname{i}$ with rational quotient $\alpha_i/\alpha_j$ for $i,j =1,\ldots r$, and if they are semisimple. Let $X$ be a right invariant vector field on $G$. It is possible to associate a linear vector field $\XC$ for $X$. Thus, we show that the invariant flow $\exp(tX)$ has periodic orbits if and only if the eigenvalues of the derivation $\DC=- \ad(X)$ which are not null are $\pm \alpha_1 \operatorname{i}, \ldots, \pm \alpha_r \operatorname{i}$ and semisimple with rational quotient $\alpha_i/\alpha_j$ for $i,j =1,\ldots r$.

In \cite{stelmastchuk}, the author studied the periodic orbits of linear flows on a compact, semisimple Lie group. Using the main result we study periodic orbits of linear flows in a noncompact, semisimple Lie group $G$, where we use the Iwasawa's decomposition $\fg = \fk \oplus \fa \oplus \fn$. In fact, we show that orbits that are not fixed points of the linear flow $\varphi_t$ with derivations $\DC = - \ad(aH +bX)$ are not periodic, where $H \in \fa, X \in \fn$, and $a, b \in \R$. Since $\fk$ is the compact part, a natural question arises: is the unique possibility of linear flow has periodic orbit for derivations $\DC= -\ad(Y)$ with $Y \in \fk$? No. To view this, we study periodic orbits of a linear flow on the special linear group $\Sl(2,\R)$. 

Finally, we work with real, connected, simply connected, solvable Lie groups of dimension 2 or 3, following the classification presented in \cite{biggs}.

This paper is organized as follows. Section 2 briefly reviews the notions of linear vector fields. Section 3 works with periodic orbits of linear and invariant flows on a connected Lie group. Section 4 studies periodic orbits on noncompact, semisimple Lie groups. Section 5 works with periodic orbits of a linear flow $\varphi_t$ on a connected, simply connected, solvable Lie group of dimension 2 or 3. 

\section{Linear vector fields}

In this section we recall some basic facts about linear vector field. For a fuller treatment we refer to \cite{ayalatirao}, \cite{cardetti}, and \cite{jouan}. Let $G$ be a connected Lie group, and let us denote by $\fg$ its Lie algebra. A vector field $\XC$ on $G$ is called linear if its flow $(\varphi_t)_{t\in\R}$ is a family of automorphisms of the Lie group $G$. For a linear vector field $\XC$ it is possible to associated a derivation of Lie algebra $\fg$:
\[
	\DC(Y) = - [\XC,Y], \, Y \in \fg. 
\] 

For the convenience of the reader we resume some facts about a linear vector field $\XC$ and its flow $\varphi_t$. The proof of these facts can be found in \cite{jouan}. 
\begin{proposition}\label{linearproperties}
	Let $\XC$ be a linear vector field and $\varphi_t$ its flow. The following assertions are equivalent:
	\begin{description}
		\item{(i)} the linear flow $\varphi_t$ is an automorphism of Lie groups for each $t$;
		\item{(ii)} for $g,h \in G$ it follows that $\XC$ is linear iff $\XC(gh) = R_{h*}\XC(g) + L_{g*}\XC(h)$; 
		\item{(iii)} at identity $e$, we have $(d\varphi_t)_e = e^{t\DC}$ for all $t \in \R$.
	\end{description}
\end{proposition}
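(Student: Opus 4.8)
The plan is to prove the equivalences around assertion (i), which is taken as the working definition of linearity, by establishing the cycle (i)~$\Rightarrow$~(ii)~$\Rightarrow$~(i) and then (i)~$\Leftrightarrow$~(iii); throughout I use, as in \cite{jouan}, that a linear vector field is complete, so that $\varphi_t$ makes sense for every $t\in\R$. For (i)~$\Rightarrow$~(ii) I would differentiate the homomorphism identity $\varphi_t(gh) = \varphi_t(g)\,\varphi_t(h)$ at $t=0$: the left-hand side gives $\XC(gh)$, while the right-hand side, via the differential of the multiplication map $m\colon G\times G\to G$, namely $dm_{(g,h)}(v,w) = R_{h*}v + L_{g*}w$, applied to $(\XC(g),\XC(h))$, gives $R_{h*}\XC(g) + L_{g*}\XC(h)$. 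For the converse (ii)~$\Rightarrow$~(i), fix $g,h$ and set $\gamma(t) = \varphi_t(g)\,\varphi_t(h)$; differentiating and invoking (ii) at the pair $(\varphi_t(g),\varphi_t(h))$ yields $\dot\gamma(t) = \XC(\gamma(t))$, so $\gamma$ is the integral curve of $\XC$ through $gh$, whence $\gamma(t) = \varphi_t(gh)$ by uniqueness; thus each $\varphi_t$ is a homomorphism, and being a diffeomorphism it is an automorphism.

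For (i)~$\Rightarrow$~(iii): since every $\varphi_t$ fixes $e$ and $\varphi_{t+s} = \varphi_t\circ\varphi_s$, the chain rule shows that $A(t) := (d\varphi_t)_e$ is a smooth one-parameter subgroup of $\mathrm{GL}(\fg)$, hence $A(t) = e^{tA'(0)}$, and it remains only to check $A'(0) = \DC$. I would do this by identifying $Y\in\fg$ with the associated invariant vector field and noting $\big((\varphi_t)_* Y\big)(e) = (d\varphi_t)_e\, Y(e)$, since $\varphi_{-t}(e)=e$; differentiating at $t=0$ and using the Lie-derivative identity $\frac{d}{dt}\big|_{t=0}(\varphi_t)_* Y = -[\XC,Y]$ gives $A'(0)\,Y(e) = -[\XC,Y](e) = \DC(Y)$, as wanted.

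Finally, for (iii)~$\Rightarrow$~(i): since $\DC$ is a derivation of $\fg$, each $e^{t\DC}$ is a Lie algebra automorphism, which integrates, using connectedness of $G$, to a Lie group endomorphism agreeing with $\varphi_t$ to first order at $e$, and one then argues that this forces $\varphi_t$ to be that automorphism. The first two links, (i)~$\Leftrightarrow$~(ii) and (i)~$\Rightarrow$~(iii), are essentially bookkeeping with $dm_{(g,h)}$ and the Lie-derivative formula. I expect the genuine difficulty to be concentrated in (iii)~$\Rightarrow$~(i): the differential at $e$ alone does not see the higher-order behaviour of a flow, so recovering global linearity needs the extra input that $\DC$ comes from a linear field $\XC$ together with the lift of $e^{t\DC}$ to $G$ (and completeness of $\XC$); this is exactly the place where I would defer to the detailed arguments of \cite{jouan} rather than reprove them here.
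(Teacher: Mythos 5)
The paper itself offers no proof of this proposition: it is stated as a summary of results from \cite{jouan}, so there is no in-paper argument to measure you against. Judged on its own terms, your treatment of (i)~$\Leftrightarrow$~(ii) and of (i)~$\Rightarrow$~(iii) is correct and is the standard route: differentiating the homomorphism identity through $dm_{(g,h)}(v,w)=R_{h*}v+L_{g*}w$, the uniqueness-of-integral-curves argument for the converse, and the identification $A'(0)=\DC$ via $\tfrac{d}{dt}\big|_{t=0}(\varphi_t)_*Y=-[\XC,Y]$ (your signs are consistent with the paper's convention $\DC(Y)=-[\XC,Y]$). You might note explicitly that (i)~$\Rightarrow$~(iii) uses $\varphi_t(e)=e$, which is what makes $(d\varphi_t)_e$ an endomorphism of $\fg$ in the first place.

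The genuine problem is in your sketch of (iii)~$\Rightarrow$~(i). You propose to integrate the Lie algebra automorphism $e^{t\DC}$ to a Lie group endomorphism ``using connectedness of $G$''; but integrating an automorphism of $\fg$ to one of $G$ requires \emph{simple} connectedness, and the paper's own Remark \ref{rem1} stresses precisely that on a merely connected group a derivation need not come from a linear vector field. Moreover, the closing step --- that an endomorphism ``agreeing with $\varphi_t$ to first order at $e$'' must equal $\varphi_t$ --- is not an argument: two diffeomorphisms fixing $e$ with the same differential there need not coincide, and first-order data at one point cannot detect the global homomorphism property. The argument in \cite{jouan} avoids this entirely: one uses the hypothesis that $\XC$ normalizes the algebra of (right-)invariant vector fields, so that $(\varphi_t)_*$ preserves $\fg$ and acts on it by $e^{t\DC}$; this gives the conjugation relation $\varphi_t\circ R_{\exp Y}=R_{\exp(e^{t\DC}Y)}\circ\varphi_t$, and since a connected group is generated by $\exp(\fg)$, the homomorphism property of $\varphi_t$ follows --- with no appeal to simple connectedness. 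You correctly flag (iii)~$\Rightarrow$~(i) as the hard direction and defer to \cite{jouan}, but the mechanism you propose for it is the one that fails; if you defer, defer to the normalizer argument rather than to an integration step that is false in the stated generality.
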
 

It is known that $G= \R^n$ is a Euclidean Lie Group. For any $n\times n$ matrix $A$ it is true that $\XC = A$ is a linear vector field. Furthermore, $\DC_x(b) = -[Ax,b] = Ab$. In this sense, we can view the dynamical system 
\[
	\dot{g} = \XC(g), \ \ g \in G,
\]
as a generalization of dynamical system on $\R^{n}$ given by
\[
	\dot{x} = Ax, \ \ x \in \R^n.
\]

%\begin{proposition}\label{derivationlinearflow}
%	If $\varphi_t$ is a linear flow, then $(d\varphi_t)_g = (dR_{\varphi_t(g)}) \circ e^{t\DC}$ for any $g \in G$.
%\end{proposition}
%\begin{proof}
%	It is sufficient to prove for $X \in \fg$. Let us calculate
%	\begin{eqnarray*}
%		(d\varphi_t)(g)(X) = \left.\frac{d}{ds}\right|_{s=0} \!\!\!\varphi_t(\exp(sX)g) = \left.\frac{d}{ds}\right|_{s=0} \!\!\!\exp(se^{t\DC}X)\varphi_t(g) = (dR_{\varphi_t(g)})(e^{t\DC}X),
%	\end{eqnarray*}
%	which establishes the formula.
%\end{proof}

\section{Periodic orbits}

In this section we assume that $G$ is a connected Lie group. Before we prove our main theorem, we need to introduce some concepts. Following \cite{colonius}, if for an eigenvalue $\mu$ all complex Jordan blocks are one-dimensional, i.e., a complete set of eigenvectors exists, it is called semisimple. Equivalently, the corresponding real Jordan blocks are one-dimensional if $\mu$ is real, and two-dimensional if $\mu$ and $\bar{\mu} \in \mathbb{C} \setminus \mathbb{R}$. 

\begin{proposition}\label{prop1}
	Let $\fg$ be a Lie algebra. If $\DC$ is a derivation of $\fg$, then the following conditions are equivalent:
	\begin{description}
		\item[i)] the flow $e^{t\DC}$ is periodic; 
		\item[ii)] the eigenvalues of the derivation $\DC$ which are not null are $\pm \alpha_1 \operatorname{i}, \ldots, \pm \alpha_r \operatorname{i}$ and semisimple with rational quotient $\alpha_i/\alpha_j$ for $i,j =1,\ldots r$.
	\end{description}
\end{proposition}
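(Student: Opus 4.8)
The plan is to reduce the statement to the well-known characterization of when $e^{tA}$ is periodic for a linear flow on a finite-dimensional real vector space. The derivation $\DC$ is a linear endomorphism of the finite-dimensional real vector space $\fg$, so $(e^{t\DC})_{t\in\R}$ is a one-parameter group of linear automorphisms of $\fg$, and we may forget for the moment that it consists of Lie-algebra automorphisms; periodicity of the flow is purely a linear-algebra property of $\DC$.

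For the implication (ii)$\Rightarrow$(i), I would complexify and put $\DC$ in Jordan form over $\C$. By hypothesis the nonzero eigenvalues are $\pm\alpha_1\mathrm{i},\ldots,\pm\alpha_r\mathrm{i}$ with each $\alpha_k$ real and nonzero, each is semisimple, and the zero eigenvalue (if present) must also be semisimple --- this last point needs to be extracted carefully from the wording of (ii): if $0$ occurred in a Jordan block of size $\ge 2$ then $e^{t\DC}$ would have a nonconstant polynomial entry and could not be periodic, so implicitly (ii) forces $\DC$ to be semisimple. Hence on a suitable real basis $\DC$ is block-diagonal with a zero block and $2\times 2$ blocks $\begin{pmatrix}0&-\alpha_k\\\alpha_k&0\end{pmatrix}$, so $e^{t\DC}$ is block-diagonal with the identity and rotation blocks $R(\alpha_k t)$. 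Writing $\alpha_k/\alpha_1 = p_k/q_k$ in lowest terms and letting $T = (2\pi/\alpha_1)\,\mathrm{lcm}(q_1,\ldots,q_r)$, one checks $\alpha_k T \in 2\pi\Z$ for every $k$, so $e^{T\DC} = \id$ and the flow is periodic.

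For (i)$\Rightarrow$(ii), suppose $e^{T\DC} = \id$ for some $T>0$. Then the minimal polynomial of $e^{T\DC}$ is $X-1$, which has distinct roots, so $e^{T\DC}$ is diagonalizable over $\C$; since $\DC$ commutes with and is a polynomial-limit expression in $e^{T\DC}$ one argues that $\DC$ itself is diagonalizable over $\C$ --- concretely, each generalized eigenspace of $\DC$ is $e^{T\DC}$-invariant, and on it $e^{T\DC}$ acts as $e^{T\mu}$ times a unipotent factor $e^{TN}$ (with $N$ the nilpotent part), so $e^{TN} = \id$ forces $N=0$. Thus $\DC$ is semisimple, and if $\mu$ is an eigenvalue then $e^{T\mu}=1$, hence $\mu \in \tfrac{2\pi\mathrm{i}}{T}\Z$; in particular $\mu$ is purely imaginary, so the nonzero eigenvalues come in conjugate pairs $\pm\alpha_k\mathrm{i}$ with $\alpha_k \in \tfrac{2\pi}{T}\Z\setminus\{0\}$, and then each quotient $\alpha_i/\alpha_j$ is a ratio of integers, hence rational. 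This gives (ii).

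The main obstacle I anticipate is not any single deep step but rather handling the semisimplicity bookkeeping cleanly: one must be careful that (i) forces semisimplicity of \emph{every} eigenvalue including $0$ (the nilpotent-part argument above), and conversely that the hypotheses in (ii) --- read literally as "the nonzero eigenvalues are these purely imaginary semisimple numbers" --- are genuinely enough, which again requires noting that a non-semisimple zero eigenvalue would destroy periodicity and so is tacitly excluded. The rationality/common-period argument (passing from pairwise rational ratios to a single period $T$ via a least common multiple of denominators) is routine but should be spelled out so that the equivalence is tight in both directions.
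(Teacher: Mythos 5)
Your proposal is correct and follows essentially the same route as the paper: Jordan form, periodicity of $e^{t\DC}$ forcing every Jordan block to be one- (or two-) dimensional --- hence semisimple, purely imaginary eigenvalues with pairwise rational ratios --- and, conversely, a common period built from the denominators of the ratios $\alpha_k/\alpha_1$. The one subtlety you flag, namely that (ii) as literally worded says nothing about the zero eigenvalue being semisimple even though the direction (ii)$\Rightarrow$(i) needs it, is present (and likewise handled only tacitly) in the paper's own proof, which simply assumes all eigenvalues semisimple when proving that direction.
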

\begin{proof}
	We first suppose that the flow $e^{t\DC}$ is periodic. It means that there exists $T>0$ such that  
	\[
		e^{(t+T)\DC} = e^{t\DC}.
	\] 
	A simple calculus shows that $e^{T\DC} = Id$. If $J$ is the Jordan form of $\DC$, then $e^{TJ} =Id$. 
	
	We break the proof in two steps: \\
	i) real eigenvalues: Let $\mu$ be a real eigenvalue of derivation $\DC$. Let us denote by $J_\mu$ the m-dimensional Jordan block of $\mu$. We thus get $e^{TJ_\mu} =I_m$. Consider $m>1$. From Jordan block $J_\mu$ we see that $e^{T\mu}(T) =0$. It implies that $T=0$, a contradiction. Therefore $m=1$. It means that $\mu$ is a semisimple eigenvalue. Consequently, $e^{TJ_\mu} =I_1$, which gives $e^{T\mu} = 1$. We thus get $T \mu = 0$. Since $T>0$, it follows that $\mu =0$. It shows that unique real eigenvalue  of derivation $\DC$ is $0$. \\
	ii) complex eigenvalues: suppose that $\mu = \alpha \pm i\beta$ are conjugate, complex eigenvalues of derivation $\DC$. Write
	\[
		R = R(t) = \left(
	  \begin{array}{cc}
			\cos(t\beta) & -\sin(t\beta) \\
			\sin(t\beta) & \cos(t\beta)
		\end{array}
		\right).
	\]
	Let $J_\mu$ denote the 2m-dimensional Jordan block of $\mu$. Suppose that $m>1$. From $e^{TJ_{\mu}} = I_{2m}$ we see that 
	\[
		e^{\alpha T}\cdot T\cdot R = 
	\left(
	  \begin{array}{cc}
			0 & 0 \\
			0 & 0
		\end{array}
		\right).
	\]
	A simple account shows that $T=0$, a contradiction. Therefore $m=1$. It means that $\mu$ is semisimple. Hence the Jordan block  $J_\mu$ is two dimensional. It gives $e^{TJ_{\mu}} = I_{2}$. From this equality we conclude that
	\[
		e^{\alpha T} \cos(\beta T) = 1 \ \ \mbox{and} \ \ e^{\alpha T} \sin(\beta T) = 0.
	\]
	The second equality implies that $\sin(\beta T) = 0$. It follows that $\beta T = n \pi$ for any $n \in \Z$. Substituting this in the first equation gives 
	\[
		e^{\alpha T} \cos(n \pi) = 1.
	\]
	Because $e^{\alpha T}>0$, it follows that $n = 2m$ for some $m \in Z$. Hence $e^{\alpha T} = 1$. So $\alpha T = 0$. Since $T>0$, we have $\alpha = 0$. It means that complex eigenvalues of derivation $\DC$ are of the form $\mu = \pm \beta i$. 

	Now we proved that nonnull complex eigenvalues $\pm \alpha \operatorname{i}$ of $\DC$ yield rational quotient. As proved above, any real Jordan Block of $\DC$ has dimension 1 or 2 if it is real or complex, respectively. Furthermore, we showed that unique real eigenvalue of $\DC$ is $0$. Therefore its real Jordan block is written as $J_0 =[0]$. Thus $e^{tJ_0}$ is constant. It means that in direction of $0$ the curve $e^{tJ}$ is constant. Consequently, solutions associated to the eigenvalue $0$ are trivially periodic. On the other hand, suppose that there exist non null complex eigenvalues $\pm\alpha_i \operatorname{i}$, $i=1, \ldots, r$. By proved above, its real Jordan blocks are  
	\[
		\left(
		\begin{array}{cc}
			\cos(t\alpha_i) & -\sin(t\alpha_i) \\
			\sin(t\alpha_i) & \cos(t\alpha_i)
		\end{array}
		\right), \,\, i= 1, \dots r.
	\]
		As $e^{TJ} =Id$ we have $\alpha_i \cdot T = p_i \cdot 2\pi$ for some $p_i \in \Z$, $i=1, \ldots, r$. It means for any $i,j = 1, \ldots, r$ that $\alpha_i/\alpha_j = p_i/p_j$ is a rational number 
		
	Reciprocally, assume that the eigenvalues of $\DC$ are semsimple and they are $0$ or $\pm\alpha_1 \operatorname{i}, \ldots, \pm \alpha_r \operatorname{i}$ with $\alpha_i \neq 0$, $i=1, \ldots, r$, and  $\alpha_i/\alpha_j$ is rational for $i,j=1,\ldots n$. Trivially the solution is constant for the eigenvalue $0$. We thus work with the eigenvalues $\pm \alpha_i \operatorname{i}$ with $\alpha_i \neq 0$. By assumption, $\pm \alpha_i \operatorname{i}$ is semisimple eigenvalue for $i = 1, \ldots, r$. It implies that every real Jordan block has dimension two and the solution applied at this block gives the following matrix 
	\[
		\left(
	  \begin{array}{cc}
			\cos(t\alpha_i) & -\sin(t\alpha_i) \\
			\sin(t\alpha_i) & \cos(t\alpha_i)
		\end{array}
		\right).
	\]
	On the other hand, we know that there exists $p_{ij}, q_{ij} \in \Z$ with $q_{ij} >0$ such that $\alpha_i/\alpha_j = p_{ij}/q_{ij}$ for $i,j = 1, \ldots r$. In particular, we can written $\alpha_i = (p_{i1}/q_{i1})\alpha_1$ for $i=2, \ldots, r$. Supposing that $\alpha_1> 0$ it is sufficient to take $T= q_{21}q_{31}\ldots q_{r1}  (2\pi/\alpha_1)$ to get satisfies $e^{TJ} =Id$, where $J$ is the Jordan form. In other words, $Id$ is a periodic point of $e^{TJ}$ with period $T>0$, which is equivalent $Id$ to be periodic point of $e^{TD}$. 
\end{proof}

\begin{remark}\label{rem1}
	Let $\XC$ be a linear vector field on $G$. In a natural way, we define a derivation $\DC = - \ad(\XC)$ on the Lie algebra $\fg$ associated to $\XC$. On contrary, it is not true that a derivation yields a linear vector field if $G$ is only connected. However, if $G$ is connected e simply connected, then there is a one-to-one relation between derivation and linear vector field (see \cite{sanmartin} for more details). 
\end{remark}

Let $\tilde{G}$ be the simply connected covering of the Lie group $G$ and $\pi: \tilde{G} \rightarrow G$ the canonical projection. Let $\XC$ be a linear vector field and $ \DC$ its associated derivation. By Theorem 2.2 of \cite{ayalatirao}, there exists a unique linear vector field $\tilde{\XC}$ on $\tilde{G}$ whose associated derivation is $\DC$. Let us denote by $\tilde{\varphi}_t$ the flow of $\tilde{\XC}$. Thus for any $X \in \fg$ we have  
\[
	\pi(\tilde{\varphi}_t(\exp_{\tilde{G}} X)) = \pi(\exp_{\tilde{G}}(e^{t\DC} X)) = \exp_{G}(e^{t\DC} X) =\varphi_t(\exp_{G} X).
\]
By connectedness, it holds that
\[
	\pi \circ \tilde{\varphi}_t = \varphi_t \circ \pi \ \ \mbox{ for any} \ \ t \in \R
\]
implying that $\tilde{\XC}$ and $\XC$ are $\pi$-related. 

\begin{lemma}\label{lem1}
	Under the conditions stated above, if $g \in G$ and $\tilde{g} \in \tilde{G}$ such that $\pi(\tilde{g}) =g$ and if flow $\tilde{\varphi}_t(\tilde{g})$ is periodic with period $T$, then also is $\varphi_t(g)$.  
\end{lemma}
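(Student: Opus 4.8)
The plan is to transport periodicity downward along the covering projection, using only the intertwining relation $\pi \circ \tilde\varphi_t = \varphi_t \circ \pi$ established just above the statement. First I would record that, since $\pi(\tilde g) = g$, for every $t \in \R$ we have
\[
	\varphi_t(g) = \varphi_t(\pi(\tilde g)) = \pi\bigl(\tilde\varphi_t(\tilde g)\bigr),
\]
so the orbit of $g$ under $\varphi_t$ is exactly the $\pi$-image of the orbit of $\tilde g$ under $\tilde\varphi_t$.

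Next, from the hypothesis that $t \mapsto \tilde\varphi_t(\tilde g)$ is periodic with period $T$, i.e.\ $\tilde\varphi_{t+T}(\tilde g) = \tilde\varphi_t(\tilde g)$ for all $t$, I would apply $\pi$ to both sides and use the displayed identity to get $\varphi_{t+T}(g) = \varphi_t(g)$ for all $t \in \R$. This already shows that $t \mapsto \varphi_t(g)$ is periodic, with (not necessarily minimal) period dividing $T$. If one wants to be sure the image orbit is a genuine periodic orbit rather than a fixed point, one argues by discreteness of the fibre: were $\varphi_t(g) = g$ for all $t$, then $\tilde\varphi_t(\tilde g)$ would lie in $\pi^{-1}(g)$ for all $t$; since $\pi^{-1}(g)$ is discrete, $\R$ is connected, and $t \mapsto \tilde\varphi_t(\tilde g)$ is continuous, this curve would be constant, contradicting that $\tilde\varphi_t(\tilde g)$ is a genuine periodic orbit.

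The argument is short, and the only points needing care are formal: applying the intertwining relation at the correct base point, and claiming only that the period of $\varphi_t(g)$ divides $T$ rather than equals it, since $\pi$ may identify distinct points of the lifted orbit. I do not expect any real obstacle in this direction; the genuinely delicate part of the theory is the opposite implication — lifting a periodic orbit on $G$ to one on $\tilde G$, where discreteness of the deck group must be exploited — which is not what this lemma asks.
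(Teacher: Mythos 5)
Your proof is correct and follows essentially the same route as the paper: apply $\pi$ to the relation $\tilde{\varphi}_{t+T}(\tilde{g}) = \tilde{\varphi}_t(\tilde{g})$ and use the intertwining identity $\pi \circ \tilde{\varphi}_t = \varphi_t \circ \pi$ to conclude $\varphi_{t+T}(g) = \varphi_t(g)$. Your extra remarks --- that the minimal period downstairs may only divide $T$, and that discreteness of the fibre rules out the image collapsing to a fixed point --- go beyond what the paper's one-line proof records, but they are sound and do not change the argument.
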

\begin{proof}
	Suppose that $\tilde{\varphi}_t(\tilde{g})$ is periodic. Then there exists a time $T>0$ such that $\tilde{\varphi}_{t+T}(\tilde{g}) = \tilde{\varphi}_t(\tilde{g})$. Since $\pi(\tilde{g}) = g$, it follows 
	\[
		\varphi_{t+T}(g) = \varphi_{t+T}(\pi(\tilde{g})) = \pi(\tilde{\varphi}_{t+T}(\tilde{g})) = \pi(\tilde{\varphi}_{t}(\tilde{g})) = \varphi_{t}(\pi(\tilde{g}))= \varphi_{t}(g).
	\]
	Therefore $\varphi_t(g)$ is a periodic orbit with period $T>0$.
\end{proof}

\begin{theorem}\label{teo1}
	Let $G$ be a connected Lie group. Let $\XC$ be a linear vector field on $G$ and denote by $\DC$ and $\varphi_t$ their derivation and flow, respectively. If the eigenvalues of the derivation $\DC$ which are not null are $\pm \alpha_1 \operatorname{i}, \ldots, \pm \alpha_r \operatorname{i}$  and semisimple with rational quotient $\alpha_i/\alpha_j$ for $i,j =1,\ldots r$, then there exist periodic orbits for the linear flow $\varphi_t$.
\end{theorem}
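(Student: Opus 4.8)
The plan is to deduce this directly from Proposition \ref{prop1} together with the two structural properties of a linear flow recorded in Proposition \ref{linearproperties}. First, by the hypothesis on the nonzero eigenvalues of $\DC$ and the implication (ii)$\Rightarrow$(i) of Proposition \ref{prop1}, the one-parameter group $e^{t\DC}$ on $\fg$ is periodic; hence there is a time $T>0$ with $e^{T\DC}=\id_{\fg}$. Second, by Proposition \ref{linearproperties}(i) each $\varphi_t$ is an automorphism of $G$, and by Proposition \ref{linearproperties}(iii) its differential at $e$ is $(d\varphi_t)_e=e^{t\DC}$; since a Lie group automorphism intertwines the exponential map with its differential at the identity, this yields
\[
	\varphi_t(\exp X)=\exp\big(e^{t\DC}X\big),\qquad X\in\fg,\ t\in\R.
\]

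With these two facts in hand the argument is short. Fix any $X\in\fg$ and consider the orbit $t\mapsto\varphi_t(\exp X)$. Using the displayed identity and $e^{T\DC}=\id_{\fg}$ we get, for all $t\in\R$,
\[
	\varphi_{t+T}(\exp X)=\exp\big(e^{(t+T)\DC}X\big)=\exp\big(e^{t\DC}X\big)=\varphi_t(\exp X),
\]
so this orbit is periodic, with period dividing $T$. To exhibit a genuinely non-trivial periodic orbit (not a fixed point) whenever $\DC\neq 0$: pick a nonzero eigenvalue $\alpha_1\operatorname{i}$ of $\DC$ and a two-dimensional $\DC$-invariant plane $V\subseteq\fg$ on which $e^{t\DC}$ acts as rotation by the angle $t\alpha_1$ (such $V$ exists because $\alpha_1\operatorname{i}$ is semisimple); choosing $0\neq X\in V$ small enough that $\exp$ is injective on a neighbourhood of $0$ containing the circle $\{e^{t\DC}X:t\in\R\}$, the curve $t\mapsto e^{t\DC}X$ is a non-constant rotation, hence $t\mapsto\varphi_t(\exp X)$ is a non-constant periodic orbit. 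If instead $\DC=0$ then $\varphi_t=\id_G$ and every point is trivially a periodic orbit.

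Alternatively, one can route the same computation through the simply connected cover $\tilde G$: on $\tilde G$ the relation $\tilde\varphi_t(\exp_{\tilde G}X)=\exp_{\tilde G}(e^{t\DC}X)$ is already recorded in the excerpt, so $\tilde\varphi_t(\exp_{\tilde G}X)$ is periodic with period dividing $T$ by the computation above, and Lemma \ref{lem1} then transports periodicity down to $\varphi_t(\exp_G X)$ via $\pi$. I do not expect a genuine obstacle here: the statement is essentially a corollary of Proposition \ref{prop1}, and the only points requiring minor care are (a) the standard fact that a group automorphism commutes with $\exp$ through its differential, and (b) arranging non-triviality of the exhibited orbit, which as above needs only local injectivity of the exponential map. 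Note that only the ``if'' direction is asserted; the necessity of the eigenvalue conditions for the existence of periodic orbits of $\varphi_t$ is a separate and harder matter, not part of this statement.
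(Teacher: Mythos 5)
Your proof is correct, and it reaches the conclusion by a slightly different (and arguably cleaner) route than the paper. Both arguments begin identically: Proposition \ref{prop1} gives $T>0$ with $e^{T\DC}=\id_{\fg}$. The paper then passes to the simply connected cover $\tilde G$, argues that $(d\tilde\varphi_{t+T})_e=(d\tilde\varphi_t)_e$ forces $\tilde\varphi_{t+T}=\tilde\varphi_t$ because two automorphisms of a connected group with the same differential at $e$ coincide, and finally pushes periodicity down to $G$ via Lemma \ref{lem1}. You instead stay on $G$ and use the naturality identity $\varphi_t(\exp X)=\exp(e^{t\DC}X)$, which immediately gives periodicity of every orbit through a point of $\exp(\fg)$; since only existence of a periodic orbit is claimed, this suffices, and it makes the detour through $\tilde G$ and Lemma \ref{lem1} unnecessary (you note the cover route as an alternative anyway, so nothing is lost). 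The two approaches rest on the same underlying fact --- an automorphism of a connected Lie group is determined by, and intertwines $\exp$ with, its differential at the identity --- so the difference is one of packaging rather than substance. Where your write-up genuinely improves on the paper is the non-triviality step: the paper ends with ``$\varphi_t(g)$ is a periodic orbit with period $T$ if $g$ is not a fixed point'' without showing such a $g$ exists, whereas you exhibit one by taking $X\neq 0$ in a two-dimensional $\DC$-invariant plane on which $e^{t\DC}$ acts by a nontrivial rotation and choosing $X$ small enough that $\exp$ is injective near the resulting circle; you also correctly dispose of the degenerate case $\DC=0$. The only caveat, inherited equally by both proofs, is that the hypothesis as stated does not require the eigenvalue $0$ to be semisimple, while the reverse implication of Proposition \ref{prop1} actually uses that; this is a defect of the paper's statement, not of your argument relative to it.
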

\begin{proof}
	Suppose that our assumptions about eigenvalues of the derivation $\DC$ are true. Therefore the flow $e^{t\DC}$ is periodic by Proposition \ref{prop1}. It means that there is a $T>0$ such that $e^{(t+T)\DC} = e^{t\DC}$ for any $t \in \R$. By Proposition \ref{linearproperties}, 
	\[
		(d\tilde{\varphi}_{t+T})_e = (d \tilde{\varphi}_t)_e.
	\]
	Since $\tilde{G}$ is connected and $\tilde{\varphi}_{t+T}$  and $\tilde{\varphi}_t$ are homomorphism, it follows that $\tilde{\varphi}_{t+T}(\tilde{g}) =  \tilde{\varphi}_{t}(\tilde{g})$ for any $\tilde{g} \in \tilde{G}$. It means that $\tilde{\varphi}_t(\tilde{g})$ is a periodic orbit. Take $g \in G$ such that $\pi(\tilde{g})= g$. By Lemma \ref{lem1}, $\varphi_t(g)$ is periodic orbit with period $T$ if $g$ is not a fixed point. 
\end{proof}

 A direct consequence is that any linear flow $\varphi_t$ with derivation $\DC$ that have eigenvalues with real part nonnull do not have periodic orbits.

\begin{corollary}\label{cor1}
	Under the hypotheses of Theorem \ref{teo1}, if a derivation $\DC$ on $\fg$ has only real eigenvalues, then there are not periodic orbits to the linear flow $\varphi_t$ associate to $\DC$.
\end{corollary}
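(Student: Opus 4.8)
The plan is to combine the characterization in Proposition \ref{prop1} with the observation recorded just after Theorem \ref{teo1} (that a linear flow whose derivation has an eigenvalue of nonzero real part has no periodic orbit). Assume $\DC$ has only real eigenvalues. First dispose of the case $\DC=0$: then $(d\varphi_t)_e=e^{t\DC}=\id$ for all $t$ by Proposition \ref{linearproperties}(iii), and since $G$ is connected and each $\varphi_t$ is an automorphism of $G$, it follows that $\varphi_t=\id_G$ for every $t$ (an automorphism of a connected Lie group is determined by its differential at $e$, the same argument used in the proof of Theorem \ref{teo1}); hence every orbit is a single point and there is nothing to prove. If one keeps the eigenvalue hypothesis inherited from Theorem \ref{teo1} in force, this is in fact the whole story: "the nonzero eigenvalues of $\DC$ are purely imaginary" together with "every eigenvalue of $\DC$ is real" forces $\DC$ to have no nonzero eigenvalue, and the semisimplicity of the eigenvalue $0$ established in the proof of Proposition \ref{prop1} then gives $\DC=0$.

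If instead one allows any $\DC\neq 0$ with real spectrum, then $\DC$ has a real eigenvalue $\lambda\neq 0$, whose real part $\lambda$ is nonzero, and the quoted consequence of Theorem \ref{teo1} applies. To turn that consequence into an honest argument on an arbitrary connected $G$ — rather than on $\R^n$, where it is the elementary fact that $e^{tA}x_0$ is unbounded unless $x_0\in\ker A$ — I would argue as follows. Suppose $\varphi_t(g_0)$ were a nonconstant periodic orbit of period $T$. Then $t\mapsto\Ad(\varphi_t(g_0))=e^{t\DC}\Ad(g_0)e^{-t\DC}$ would be a nonconstant periodic orbit of the linear flow $M\mapsto e^{t\DC}Me^{-t\DC}$ on the real vector space $\mathfrak{gl}(\fg)$, whose infinitesimal generator $M\mapsto \DC M-M\DC$ has only real eigenvalues (they are the differences of the eigenvalues of $\DC$); by the Euclidean case this orbit must be constant, i.e. $\varphi_t(g_0)g_0^{-1}\in Z(G)$ for all $t$. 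This pins the orbit inside a single coset of the identity component of the centre; after passing to the universal cover (as in Lemma \ref{lem1}) that component is a vector group on which the induced flow is affine with real linear part $\DC|_{\fz(\fg)}$, and since $\int_0^T e^{sA}\,ds$ is invertible whenever $A$ has only real eigenvalues, the only periodic orbit through the base point is the constant one — a contradiction. The remaining case, $g$ a fixed point of $\varphi_t$, is excluded by "nonconstant".

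The substantive step, and the one most likely to cause trouble, is exactly this last one: reducing a periodic orbit of a linear flow on a general connected Lie group — through $\Ad$ and a covering — to an affine flow on a vector group where the reality of the spectrum of $\DC$ can be used directly. (Some care is also needed in the genuinely degenerate situation where $\DC$ is nilpotent and nonzero while the centre of $G$ carries a compact factor; this is precisely where the hypotheses inherited from Theorem \ref{teo1}, which force $\DC=0$, do the real work, and it is these hypotheses that the phrase "under the hypotheses of Theorem \ref{teo1}" should be read as invoking.)
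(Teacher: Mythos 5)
The first half of your argument is the paper's proof, almost verbatim: reading ``under the hypotheses of Theorem \ref{teo1}'' as keeping the eigenvalue condition in force, ``only real eigenvalues'' collapses the spectrum to $\{0\}$, semisimplicity of $0$ forces $\DC=0$, Proposition \ref{linearproperties}(iii) gives $(d\varphi_t)_e=\id$, and connectedness of $G$ then gives $\varphi_t=\mathrm{Id}_G$, so every point is fixed and no nonconstant periodic orbit exists. The paper stops there. Your second argument --- dropping the inherited hypotheses, pushing a hypothetical periodic orbit through $\Ad$ to a linear flow on $\mathfrak{gl}(\fg)$ whose generator $M\mapsto \DC M-M\DC$ has real spectrum, concluding the orbit lies in a coset of the centre, and then killing it on the vector group $Z(\tilde{G})_0$ via invertibility of $\int_0^T e^{s\DC}\,ds$ --- is genuinely different and strictly stronger: it is essentially what one would need to justify the paper's unproved remark after Theorem \ref{teo1} that a derivation with an eigenvalue of nonzero real part admits no periodic orbits, something the paper's own proof (which only ever handles $\DC=0$) does not deliver. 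The two facts you use without proof there (that the identity component of the centre of the simply connected cover is a vector group, and the cocycle computation making the restricted flow affine) are both correct, though they deserve a line each. One soft spot you share with the paper: Theorem \ref{teo1} and Proposition \ref{prop1}(ii) as literally stated only assert semisimplicity of the \emph{nonzero} eigenvalues, so the step ``$0$ is semisimple, hence $\DC=0$'' silently excludes a nonzero nilpotent $\DC$; this is an imprecision in the paper's hypotheses rather than a gap you introduced, and your second argument in fact covers that case too.
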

\begin{proof}
	Suppose, contrary our claim, that $\varphi_t(g)$ is a periodic orbit for a not fixed point $g \in G$ . From Theorem \ref{teo1} we see that the unique real eigenvalue is $0$. Furthermore, $0$ is a semisimple eigenvalue. It implies that $\DC$ is null. By Proposition \ref{linearproperties},
	\[
		(d\varphi_t)_{e} = e^{tD} = I_\fg,\, t \in \R, 
	\]
	where $I_\fg$ is the identity map. Denoting by $I_G$ the identity homomorphism of $G$ we see that $\varphi_t = I_G$ for any $t$ because $G$ is connected. This gives $\varphi_t(g) =g$ for every $t \in \R$. It means that every $g$ is a fixed point, which is a contradiction. 
\end{proof}

In the sequel, let $X$ be a right invariant vector field on $G$. Define a vector field by $\XC = X + I_*X$, where $I_{*}X$ is the left invariant vector field associated to $X$. Here $I_*$ is the differential of inverse map $\mathfrak{i}(g) = g^{-1}$ (more details is founded in \cite{sanmartin}). It is possible to show that $\XC$ is linear and its associated derivation is given by $\DC = -\ad(\XC) = - \ad(X)$. Furthermore, the differential equation (\ref{odelinear}) is written as 
\[
	\dot{g} = X(g) + (I_*X)(g), \,\, g \in G.
\]

It is possible to show that linear flow $\varphi_t$ is solution of (\ref{odelinear}) if and only if $\varphi_t(g)\cdot \exp(tX)$ is solution of $\dot{g} = X(g)$. 

\begin{theorem}\label{teo2}
	Let $G$ be a connected Lie group and $X$ be a right invariant vector field on $G$. The following conditions are equivalent:
	\begin{description}
		\item[i)] there exists a periodic orbit for the right invariant flow $\exp(tX)$; 
		\item[ii)] the eigenvalues of the derivation $\DC= -\ad(X)$ which are not null are $\pm \alpha_1 \operatorname{i}, \ldots, \pm \alpha_r \operatorname{i}$ and semisimple with rational quotient $\alpha_i/\alpha_j$ for $i,j =1,\ldots r$.
	\end{description}
\end{theorem}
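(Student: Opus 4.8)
The plan is to reduce Theorem \ref{teo2} to the already-established Proposition \ref{prop1} and Theorem \ref{teo1} by exploiting the explicit relation between the right invariant flow and the linear flow. Recall from the discussion preceding the statement that the linear vector field $\XC = X + I_*X$ has associated derivation $\DC = -\ad(X)$, and that $\varphi_t$ solves (\ref{odelinear}) if and only if $t \mapsto \varphi_t(g)\exp(tX)$ solves $\dot g = X(g)$; in particular the right invariant flow is $\psi_t(g) = \varphi_t(g)\exp(tX)$, and since $\varphi_t$ is an automorphism fixing $e$ we have $\psi_t(e) = \exp(tX)$. So the orbit of $e$ under the invariant flow is precisely the one-parameter subgroup $\exp(tX)$.

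First I would prove (ii) $\Rightarrow$ (i). Assuming the eigenvalue condition on $\DC = -\ad(X)$, Proposition \ref{prop1} gives a $T>0$ with $e^{T\DC} = \id_\fg$, hence $e^{T\,\ad(X)} = \id_\fg$, i.e. $\Ad(\exp(TX)) = \id$ if $G$ is, say, simply connected — more robustly, pass to the simply connected cover $\tilde G$ as in the paragraph before Lemma \ref{lem1}, where $\tilde\varphi_{t+T} = \tilde\varphi_t$ by the argument in the proof of Theorem \ref{teo1}. Then on $\tilde G$ one computes, using $\tilde\psi_t(\tilde g) = \tilde\varphi_t(\tilde g)\exp_{\tilde G}(tX)$ and the fact that $e^{T\DC}=\id$ forces $\exp_{\tilde G}(TX)$ to be central with $\Ad(\exp_{\tilde G}(TX)) = \id$, that $\tilde\psi_{t+T}(\tilde e) = \tilde\psi_t(\tilde e)\cdot\exp_{\tilde G}(TX)$. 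To close the loop we need $\exp_{\tilde G}(TX) = \tilde e$, or at least that a suitable integer multiple of $T$ works; this is exactly where some care is required (see below). Projecting down via $\pi$ as in Lemma \ref{lem1} then yields a periodic orbit of $\exp(tX)$ on $G$.

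Next, (i) $\Rightarrow$ (ii). Suppose $\exp(t_0 X)$ returns: $\exp((t_0+T)X) = \exp(t_0 X)$ for some $T>0$, so $\exp(TX) = e$. Differentiating, or applying $\Ad$, gives $e^{T\,\ad(X)} = \Ad(\exp(TX)) = \Ad(e) = \id_\fg$, that is $e^{-T\DC} = \id_\fg$. Hence the flow $e^{t\DC}$ on $\fg$ is periodic with period $T$, and Proposition \ref{prop1} delivers precisely the asserted spectral condition on $\DC$. The same argument applies verbatim if the periodic orbit is that of an arbitrary $g$ rather than $e$, since $\psi_t(g) = \varphi_t(g)\exp(tX)$ and $\psi_{t+T}(g) = \psi_t(g)$ together with $\varphi$ being a flow of automorphisms again forces $\exp(TX)$ to act trivially.

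The main obstacle is the passage between ``$e^{T\DC} = \id$'' and ``$\exp(TX) = e$'' in the direction (ii) $\Rightarrow$ (i): the vanishing of $T\DC$ only says $\exp(TX)$ lies in the center and acts trivially under $\Ad$, not that it is the identity. On the simply connected cover one should argue that $\exp_{\tilde G}(TX)$ lies in $Z(\tilde G)$, which is discrete precisely when the center is discrete; in general one replaces $T$ by an integer multiple $NT$ so that $\exp_{\tilde G}(NTX)$ becomes the identity, using that the orbit $t \mapsto \exp_{\tilde G}(tX)$ is a one-parameter subgroup that is either injective (no periodicity, contradicting $e^{T\DC}=\id$ unless $\DC = 0$, the trivially periodic case) or closed with compact closure — and in the latter case its recurrence time is commensurable with $T$. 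Handling this cleanly, and noting the degenerate case $\DC = 0$ where $\exp(tX)$ is a line or a circle according to the topology of the closure of $\{\exp(tX)\}$, is the only genuinely delicate point; the rest is bookkeeping built on Proposition \ref{prop1}, Theorem \ref{teo1}, and Lemma \ref{lem1}.
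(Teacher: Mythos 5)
Your proof of (i) $\Rightarrow$ (ii) coincides with the paper's: right-translating reduces periodicity of an orbit $\exp(tX)g$ to periodicity of the one-parameter group $\exp(tX)$, applying $\Ad$ gives $e^{(t+T)\DC}=e^{t\DC}$, and Proposition \ref{prop1} yields the spectral condition. No issue there.

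For (ii) $\Rightarrow$ (i) you have correctly isolated the crux: $e^{T\DC}=\id_{\fg}$ only gives $\Ad(\exp(TX))=\id$, i.e.\ $\exp(TX)\in Z(G)$, not $\exp(TX)=e$. (The paper's own proof silently commits exactly this unjustified step --- it passes from $e^{(t+T)\DC}=e^{t\DC}$ to $\exp((t+T)X)=\exp(tX)$ with no argument --- so the gap you flagged is present in the published argument as well.) However, your proposed repair does not close it. You assert that injectivity of $t\mapsto\exp_{\tilde G}(tX)$ would contradict $e^{T\DC}=\id$ unless $\DC=0$; that is false. Take $G=\widetilde{SE}(2)$, the simply connected cover of the Euclidean motion group of the plane, with Lie algebra spanned by $P_1,P_2,J$, $[J,P_1]=P_2$, $[J,P_2]=-P_1$, $[P_1,P_2]=0$, and let $X=J$. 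Then $\DC=-\ad(J)$ has semisimple eigenvalues $0,\pm i$, so condition (ii) holds and $e^{t\DC}$ is $2\pi$-periodic, yet $\exp(tJ)$ is an injective line in $\widetilde{SE}(2)$ (the rotation circle unwinds in the cover) and no orbit of the invariant flow is periodic. No integer multiple $NT$ helps, because $\exp(tX)$ never returns to the identity at all; nor is the closure of the one-parameter subgroup compact, so the dichotomy you invoke fails. The honest conclusion is that (ii) $\Rightarrow$ (i) requires the additional input that the one-parameter subgroup generated by $X$ is non-injective (equivalently, has compact closure a circle), which is strictly stronger than the periodicity of $\Ad(\exp(tX))=e^{-t\DC}$. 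Your instinct that this is ``the only genuinely delicate point'' is right, but the delicacy is fatal to the stated equivalence rather than a technicality to be handled.
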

\begin{proof}
	Assume that there exists a $g \in G$ such that $\exp(tX)g$ is periodic with period $T>0$. It is equivalent to $\exp(tX)$ be periodic with period $T>0$. From this we deduce that 
	\begin{eqnarray*}
		\exp((t+T)X) = \exp(tX) 
		& \Rightarrow & \Ad(\exp(-(t+T)X)) = \Ad(\exp(-tX)) \\
		& \Rightarrow & e^{(t+T)\DC} = e^{t\DC},
	\end{eqnarray*}
	which means that the flow $e^{t\DC}$ is periodic. By Proposition \ref{prop1}, the eigenvalues of the derivation $\DC$ are the form $0$ or $\pm \alpha_1 \operatorname{i}, \ldots, \pm \alpha_r \operatorname{i}$ where $\alpha_i \neq 0$, $i=1, \ldots,r$, $\alpha_i/\alpha_j$ is a rational for $i,j =1,\ldots r$, and $\pm \alpha_i \operatorname{i}$, $i=1, \ldots, r$, are semisimple.
		
	Reciprocally, suppose that sentece ii) is true. By Proposition \ref{prop1}, there exist a time $T>0$ such that $e^{(t+T)\DC} = e^{tD}$. We thus get $\exp((t+T) X) = \exp(tX)$, which means that $\exp(tX)$ is periodic with period $T>0$. 
\end{proof}

Let $\DC$ be a inner derivation on $G$. We know that there exists a right invariant vector field such that $\DC = - \ad(X)$. Thus we can construct the vector field $\XC = X + I*X$ as above, which is linear and associated to $\DC$. It means that every inner derivation yields a linear vector field independent if $G$ is simply connected or not. If $G$ is semisimple Lie group, then every derivation is inner. 

\section{Semisimple Lie group}

In \cite{stelmastchuk}, the author study periodic orbits on compact, semisimple Lie groups. An important fact in this approach is that $\DC$ has only semisimple eigenvalues, therefore it simplifies the work to find periodic orbits. Now let $G$ be a noncompact, semisimple Lie group. From Iwasawa's decomposition there exists three Lie subalgebra $\fk, \fa$, and $\fn$ such that $\fg = \fk \oplus \fa \oplus \fn$. One may think that periodic orbits are in compact part of $G$ and that there are not periodic orbits on soluble part of Iwasawa's decomposition. We now work to verify this last assertion.

\begin{proposition}\label{periodicII}
	Let $G$ be a noncompact, semisimple Lie group such that its Lie algebra is decomposed as $\fg = \fk \oplus \fa \oplus \fn$. If $H \in \fa, X \in \fn$ and if $\DC = -\ad(aH+ bX)$ is a derivarion for some $a,b \in \R$, then orbits that are not fixed points of the linear flow associated to $\DC$ are not periodic 
\end{proposition}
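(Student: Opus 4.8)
The plan is to show that the derivation $\DC = -\ad(aH+bX)$ can never fulfil the eigenvalue conditions of Theorem \ref{teo1}, so that by Corollary \ref{cor1} together with the remark following Theorem \ref{teo1} no orbit through a non-fixed point is periodic. Writing $Z = aH+bX \in \fg$, I would first record the facts I need from the Iwasawa/restricted-root structure: $\ad(X)$ is nilpotent (since $X \in \fn$), and the Killing form $B$ of $\fg$ satisfies $B(\fa,\fn)=0$ and $B(H,H)>0$ for every nonzero $H\in\fa$. The first is standard; the orthogonality holds because $\fa$ and the root spaces composing $\fn$ lie in distinct $\ad(\fa)$-eigenspaces; and $B(H,H)=\sum_{\lambda}(\dim\fg_{\lambda})\,\lambda(H)^{2}>0$ because $H\neq 0$ and the restricted roots $\lambda$ span $\fa^{*}$.

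Next I would split into cases. If $aH=0$ (that is, $a=0$ or $H=0$), then $Z=bX\in\fn$, so $\ad(Z)$ is nilpotent and $\DC$ has $0$ as its only, and real, eigenvalue; Corollary \ref{cor1} then immediately gives that $\varphi_t$ has no periodic orbit except its fixed points. If $aH\neq 0$, I would compute
\[
	B(Z,Z)=a^{2}B(H,H)+2ab\,B(H,X)+b^{2}B(X,X)=a^{2}B(H,H)>0,
\]
using $B(H,X)=0$ and $B(X,X)=\tr(\ad(X)^{2})=0$. On the other hand $B(Z,Z)=\tr(\ad(Z)^{2})$ equals the sum of the squares of the eigenvalues of $\ad(Z)$, taken over the complexification with multiplicity; since those eigenvalues occur in conjugate pairs, that sum is $\le 0$ as soon as every eigenvalue is purely imaginary or zero. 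As the sum is strictly positive, $\ad(Z)$, hence $\DC$, has an eigenvalue with nonzero real part, and the remark after Theorem \ref{teo1} forbids periodic orbits through non-fixed points.

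The step I expect to be the genuine obstacle is the reduction itself, i.e. passing from a statement about the spectrum of $\DC$ to the non-existence of non-trivial periodic orbits of the flow $\varphi_t$. I would handle it purely by citing Corollary \ref{cor1} and the remark after Theorem \ref{teo1}, observing that on a semisimple $\fg$ every derivation is inner, so $\DC$ genuinely comes from a linear vector field and those results apply. Everything else, namely the Killing-form identities and the elementary fact that purely imaginary (and zero) eigenvalues contribute non-positively to $\tr(\ad(Z)^{2})$, is routine.
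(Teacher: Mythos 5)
Your argument is correct, but it follows a genuinely different route from the paper. The paper's proof is a one-line appeal to Lemma 6.4.5 of Knapp: in a suitable basis $\ad(H)$ is diagonal with real entries and $\ad(X)$ is strictly upper triangular, so $\ad(aH+bX)$ is upper triangular with real diagonal, hence \emph{all} eigenvalues of $\DC$ are real and Corollary \ref{cor1} applies uniformly, with no case split. You instead use the Killing form: your identities $B(\fa,\fn)=0$, $B(X,X)=\tr(\ad(X)^2)=0$, and $B(H,H)>0$ for $0\neq H\in\fa$ are all correct, and the inference from $\tr(\ad(Z)^2)=\sum_j\lambda_j^2>0$ to the existence of an eigenvalue with nonzero real part is sound (purely imaginary eigenvalues contribute $-\tau_j^2\le 0$ each). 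What the paper's approach buys is a stronger spectral conclusion (reality of the whole spectrum) and hence a direct application of Corollary \ref{cor1}; what yours buys is independence from the simultaneous-triangularization lemma, at the cost of a case split and of a weaker conclusion in the case $aH\neq 0$, where you only learn that \emph{some} eigenvalue has nonzero real part and must therefore invoke the unproved remark following Theorem \ref{teo1} rather than Corollary \ref{cor1} itself. That remark is asserted in the paper, so your citation is admissible within its framework, but be aware that it (like Corollary \ref{cor1}) is really a converse-type statement that Theorem \ref{teo1} does not formally deliver, so your case 2 rests on a slightly shakier citation than the paper's own proof does.
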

\begin{proof}
	Let $\DC = -\ad(aH+ bX)$ be a derivation  for some $a,b \in \R$, $H \in \fa$, and $X \in \fn$. From Lemma 6.4.5 in \cite{knapp} there exists a basis of $\fg$ such that $\ad(H)$ is a diagonal matrix with real entries and $\ad(X)$ is a upper triangular matrix with 0's on the diagonal. Hence the matrix of $\DC$ is a upper triangular matrix with real entries on diagonal. In consequence, all eigenvalues of $\DC$ are real. From Corollary \ref{cor1} it follows that orbits that are  not fixed points of the linear flow associated to $\DC$ are not periodic.  
\end{proof}

Proposition above says that linear flow that have only contribution of $\fa \oplus \fn$ are not periodic. In this way, if there exists periodic orbit for linear flow on $G$, then the derivation $\DC$ must have contribution of elements of $\fk$. To verify this, we study periodic orbits of linear flow on the semisimple Lie group $\Sl(2,\R)$. Let us denote by $\operatorname{sl}(2,\R)$ the Lie algebra of $\Sl(2,\R)$. Since $\Sl(2,\R)$ is a semisimple non-compact Lie groups, its Lie algebra has the following Iwasawa's decomposition:
\[
	\operatorname{sl}(2,\R) = \left\{\left(
	\begin{array}{cc}
		0 & -\alpha \\
		\alpha & 0
	\end{array}
	\right), \alpha \in \R
	\right\} \oplus
	\left\{
	\left(
	\begin{array}{cc}
		a & 0 \\
		0 & -a
	\end{array}
	\right), a \in \R
	\right\} \oplus
	\left\{
	\left(
	\begin{array}{cc}
		0 & \nu \\
		0 & 0
	\end{array}
	\right), \nu \in \R \right\}.
\] 
It is clear that 
\[
	\beta = \left \{
	Y=\left(
	\begin{array}{cc}
		0 & -1 \\
		1 & 0
	\end{array}
	\right),
	H =\left(
	\begin{array}{cc}
		1 & 0 \\
		0 & -1
	\end{array}
	\right),
	Z = \left(
		\begin{array}{cc}
		0 & 1 \\
		0 & 0
	\end{array}
	\right)
	\right\}
\]
form a basis of $\operatorname{sl}(2,\R)$. Furthermore, they brackets are given by
\[
	[Y,H] = 2YX + 4Z , \, [Y,Z] = - H, \, [H, Z] = 2Z.
\]
Let $\XC$ be a linear vector field on $\Sl(2,\R)$ and $\DC$ its derivation. Then there exist a right invariant vector field $X \in \operatorname{sl}(2, \R)$ such that $\DC = - \ad(X)$. Write $X$ as 
\[
	X = aY + bH + cZ,\, a, b, c \in \R.
\]
In the basis $\beta$, the matrix of derivation is written as 
\[
	\DC= - \ad(X) = \left(
	\begin{array}{ccc}
		2 b & -2 a & 0 \\
		-c & 0 & a \\
		4 b & -4 a+2 c & -2 b
	\end{array}
	\right).
\]
From this we see that the eigenvalues of $\DC$ are 
\begin{equation}\label{eigenvalues}
	\left\{0,\,-2 \sqrt{-a^2+a c+b^2},\,2 \sqrt{-a^2+a c+b^2}\right\}.
\end{equation}
We are in a position to show a condition to an orbit of linear flow  be periodic.
\begin{proposition}\label{prop2}
	Let $\XC$ be a linear vector field on $\Sl(2,\R)$ and $X$ its associated right invariant vector field. Write $X = aY + bH + cZ$ in the basis $\beta$ of $\operatorname{sl}(2,\R)$. Orbits of the linear flow $\varphi_t$ of $\XC$ that are not fixed points are periodic if $a^2>ac+b^2$.
\end{proposition}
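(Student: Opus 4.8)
The plan is to reduce the statement to an application of Theorem \ref{teo1} (equivalently Proposition \ref{prop1}) by examining the eigenvalues (\ref{eigenvalues}) of $\DC = -\ad(X)$ under the hypothesis $a^2 > ac + b^2$. Writing $\lambda = -a^2 + ac + b^2$, the hypothesis says precisely that $\lambda < 0$, so the three eigenvalues are $0$ and $\pm 2\sqrt{\lambda} = \pm 2i\sqrt{|\lambda|}$; set $\alpha_1 = 2\sqrt{|\lambda|} > 0$. Thus the nonzero eigenvalues are purely imaginary, $\pm \alpha_1 i$, and there is a single conjugate pair, so the rational-quotient condition $\alpha_i/\alpha_j \in \Q$ holds vacuously (it reduces to $\alpha_1/\alpha_1 = 1$).

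The one genuine point to verify is semisimplicity of the eigenvalues $\pm \alpha_1 i$: since $\operatorname{sl}(2,\R)$ is three-dimensional and $\DC$ has three distinct eigenvalues $0, \alpha_1 i, -\alpha_1 i$ (they are distinct because $\alpha_1 \neq 0$), the matrix $\DC$ is diagonalizable over $\C$, hence every eigenvalue is semisimple; equivalently each real Jordan block is one-dimensional for the real eigenvalue $0$ and two-dimensional for the complex pair. With this, Proposition \ref{prop1} applies: the flow $e^{t\DC}$ is periodic, and then Theorem \ref{teo1} yields that every orbit of $\varphi_t$ which is not a fixed point is periodic.

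The steps in order: first, introduce $\lambda = -a^2+ac+b^2$ and observe the hypothesis $a^2 > ac+b^2$ is equivalent to $\lambda < 0$; second, rewrite the eigenvalue set (\ref{eigenvalues}) as $\{0, \alpha_1 i, -\alpha_1 i\}$ with $\alpha_1 = 2\sqrt{|\lambda|} \neq 0$; third, note the three eigenvalues are pairwise distinct so $\DC$ is diagonalizable over $\C$, hence all its eigenvalues are semisimple; fourth, note the rational-quotient hypothesis holds trivially since there is only one $\alpha$; fifth, invoke Proposition \ref{prop1} to conclude $e^{t\DC}$ is periodic and then Theorem \ref{teo1} to conclude the non-fixed orbits of $\varphi_t$ are periodic. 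There is no real obstacle here — the argument is a direct specialization of the general machinery already developed; the only mildly delicate bookkeeping is confirming semisimplicity, which is immediate from distinctness of eigenvalues in dimension three, and being careful that $a^2 > ac + b^2$ does indeed force $\lambda$ strictly negative (so that $\alpha_1 \neq 0$ and the orbit is not forced to be a fixed point from the eigenvalue side).
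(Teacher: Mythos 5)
Your proposal is correct and takes essentially the same route as the paper, whose entire proof is the single sentence that the result ``is a direct application of Theorem \ref{teo1} with eigenvalues (\ref{eigenvalues}).'' You simply fill in the details the paper leaves implicit --- that $a^2>ac+b^2$ makes the nonzero eigenvalues purely imaginary, that semisimplicity follows from the three eigenvalues being distinct in dimension three, and that the rational-quotient condition is vacuous for a single conjugate pair.
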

\begin{proof}
	It is a direct application of Theorem \ref{teo1} with eigenvalues (\ref{eigenvalues}).
\end{proof}

%\begin{proposition}\label{prop3}
%	Let $X$ be an invariant vector field on $\Sl(2,\R)$. Write $X = aY + bH + cZ$ in the basis $\beta$ of $\operatorname{sl}(2,\R)$. A necessary and sufficient condition to the orbits of the invariant flow $\varphi_t$ of $\XC$ be periodic is $a^2>ac+b^2$.
%\end{proposition}
%\begin{proof}
%	It is a direct application of Theorem \ref{teo2} with eigenvalues (\ref{eigenvalues}).
%\end{proof}

The meaning of Propositions above is that to study periodic orbits of invariant and linear flows on $\Sl(2,\R)$ are necessary to consider the compact, abelian, and nilpotent parts of Iwasawa's decomposition. In other words, we need to consider a derivation $\DC = - \ad(X)$ with $X= H + A +N$ where $H \in \fk, A \in \fa$ and $N \in \fn$.

\section{Periodic orbits for linear flow on low dimension Lie group}

In this section we study periodic orbits of linear flow on connected, simply connected, and solvable Lie groups of dimension 2 or 3. In the semisimple case, $SL(2,\mathbb{R})$ was studied in the section above, and the unitary group $SU (2)$ and the orthogonal group $SO(3)$ were studied in \cite{stelmastchuk}.

%Following Remark \ref{rem1}, in the case soluble we only consider the connected and simply connected Lie groups. Already in the semisimple Lie groups every derivation is inner. It implies that any inner derivation yields a linear vector field in a standard way.  

\subsection{Dimension 2}
Let $\fg$ be a Lie algebra of dimension 2. It is well know that there exists two possibilities of Lie algebras: abelian and solvable. 

\subsubsection{Abelian case}
In this case any derivation is given by
\[
	\DC = 
	\left(
	\begin{array}{c c}
		a & b \\
		c & d \\
	\end{array}
	\right).
\]
A simple account shows that the eigenvalues are 
\[
	\left\{\frac{1}{2} \left(a+d-\sqrt{(a-d)^2+4bc}\right),\frac{1}{2} \left(a+d+\sqrt{(a-d)^2+4bc}\right)\right\}.
\]
By Theorem \ref{teo1}, the linear flow associated for the derivation $\DC$ has periodic orbits if $a+d=0$ and $(a-d)^2+4bc<0$.

\subsubsection{Solvable case}
The solvable, connected, simply connected Lie group of dimension 2 is the affine group $Aff_0(2)$. In \cite{jouan}, it is showed that derivations are inner and they are given by
\[
	\DC = 
	\left(
	\begin{array}{c c}
		0 & 0 \\
		c & d \\
	\end{array}
	\right).
\]
It is easy to see that eigenvalues are $\{0, d\}$. By Corollary \ref{cor1}, any linear flow in $Aff_0(2)$ do not have periodic orbit.

\subsection{Dimension 3}

Let $G$ be a connected Lie group of dimension 3. In \cite{onishchik} it is classified all connected Lie groups of dimension 3, and in \cite{biggs} we find a clear presentation of this classification. In \cite{biggs}, we find that for an (appropriate) ordered basis $(E1,E2,E3)$ of $\fg$ the Lie bracket of a connected Lie group of dimension 3 is given by 
\begin{equation}
	\label{colchetes}
	\begin{array}{rcl}
		[E_1,E_2] & = & n_3E_3 \\
		\left[E_3,E_1\right] & = & a E_1 + n_2 E_2\\
		\left[E_2,E_3\right] & = & n_1E_1 - aE_2,
	\end{array}
\end{equation}
where $a$, $n_1$, $n_2$ e $n_3$ are given by table 1 in \cite{biggs}. In each case study bellow, we give values of $a$, $n_1$, $n_2$ and $n_3$. 

Let $\DC$ be a derivation on Lie algebra $\fg$. In the basis $(E_1,E_2,E_3)$, we can write the derivation $\DC$ as  
\begin{eqnarray*}
	\DC(E_1) & = & x_1 E_1 + y_1E_2+ z_1 E_2\\
	\DC(E_2) & = & x_2 E_1 + y_2E_2+ z_2 E_2\\
	\DC(E_3) & = & x_3 E_1 + y_3E_2+ z_3 E_2
\end{eqnarray*}
with $x_1,x_2,x_3,y_1,y_2,y_3, z_1, z_2,z_3 \in \R$. Applying Lie Brackets (\ref{colchetes}) at equalities 
\[
	\DC[E_i,E_j] = [\DC(E_i),E_j] + [E_j, \DC(E_i)],\ \ \text{for }\,  i,j= 1, 2,3 \ \ \text{and} \ \ i\neq j, 
\]
we obtain the following equation system
\begin{equation}\label{sys1}
	\left\{
	\begin{array}{ccc}
		n_3x_3 + n_1 z_1 +az_2    & = & 0\\
		n_3y_3 + n_2 z_3 - az_1   & = & 0\\
		n_3z_3 - n_3 x_1 -n_3 y_2 & = & 0\\
		n_2x_2 + n_1 y_1  -a z_3  & = & 0\\
		n_2y_2 -n_2x_1- n_2 z_3   & = & 0\\
		n_2z_2 + n_3 y_3 +az_1 & = & 0\\
		n_1x_1 - n_1 y_2 - n_1z_3 & = & 0\\
		n_1y_1 + n_2 x_2 +a z_3 & = & 0\\
		-n_2x_2 + n_3 x_3 -ay_2 & = & 0.
	\end{array}
	\right.
\end{equation}
Thus, taking values to $a$, $n_1$, $n_2$ and $n_3$ in Table 1 in \cite{biggs} we can find derivations in each class of connected, simply connected Lie groups of dimension 3. 

\subsubsection{Type $3 \fg_1$ (Abelian Groups)}
	In this class the simply connected Lie group $G$ with Lie algebra $\fg \cong 3 \fg_1$ is isomorphic to $\R^3$. Lie brackets (\ref{colchetes}) assume values $a=n_1 = n_2 = n_3 = 0$. Trivially, by linear system (\ref{sys1}), the matrix of any derivation is written as
	\[
		\DC = 
		\left(
		\begin{array}{c c c}
			x_1 & x_2 & x_3 \\
			y_1 & y_2 & y_3 \\
			z_1 & z_3 & z_3 
		\end{array}
		\right).
	\]
	The characteristic polynomial of $\DC$ is $p(\lambda) =-\lambda ^3+ \operatorname{tr}(\DC)\lambda^2+ A \lambda+ \det(\DC)$ where $A = x_2y_1 -x_1y_2 + x_3z_1 -x_1z_3 + y_3z_2 -y_2z_3$. By Theorem \ref{teo1}, we want to work with derivation with one null eigenvalue and two conjugate complex eigenvalues. It is direct that $\det(\DC) = 0$. Then $p(\lambda) =\lambda(-\lambda^2+ \operatorname{tr}(\DC)\lambda + A)$ where $A = x_2y_1 -x_1y_2 + x_3z_1 -x_1z_3 + y_3z_2 -y_2z_3$ . Now consider the polynomial $q(\lambda) = - \lambda^2 + \operatorname{tr}(\DC) \lambda + A$. A simple account shows that the roots are $\lambda = (\operatorname{tr}{(\DC)} \pm \sqrt{\operatorname{tr}(\DC)^2 + 4A })/2$. By Theorem \ref{teo1} again, we want that $\operatorname{tr}(\DC) = 0$, whit implies that $\lambda_1 = \sqrt{A}$ and $\lambda_2 = -\sqrt{A}$. Summarizing, we have the conditions to the linear flow has periodic orbits. 
	\begin{proposition}
		Let $\varphi_t$ be a linear flow on the simply connected with Lie algebra of type $\fg_{3,6}$. Then $\varphi_t$ has periodic orbit if $\operatorname{tr}(\DC) = \det(\DC) = 0$ and $x_2y_1 -y_1x_2 + x_3z_1 - x_1z_3 + y_3z_2 - y_2z_3<0$.
	\end{proposition}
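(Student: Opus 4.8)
The plan is to reduce the claim to Theorem \ref{teo1} by pinning down the spectrum of the derivation $\DC$ under the three hypotheses. The given flow $\varphi_t$ is linear, so it carries a well-defined derivation $\DC = -\ad(\XC)$ on $\fg_{3,6}$, which I would write in the ordered basis $(E_1,E_2,E_3)$ as the matrix with rows $(x_1,x_2,x_3)$, $(y_1,y_2,y_3)$, $(z_1,z_2,z_3)$. For the type $\fg_{3,6}$ these entries are not free: they must satisfy the structure equations, i.e.\ the linear system (\ref{sys1}) evaluated at the values of $a,n_1,n_2,n_3$ recorded for this type in Table 1 of \cite{biggs}. I will, however, not need to solve those relations in order to read off the eigenvalues, because the coefficients of the characteristic polynomial are universal polynomial functions of the matrix entries.

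Concretely, I would first record $p(\lambda) = -\lambda^3 + \tr(\DC)\,\lambda^2 + A\,\lambda + \det(\DC)$, where $A = x_2y_1 - x_1y_2 + x_3z_1 - x_1z_3 + y_3z_2 - y_2z_3$ is minus the sum of the principal $2\times 2$ minors of $\DC$; this $A$ is exactly the quantity the hypothesis requires to be negative. Imposing $\det(\DC)=0$ factors $p(\lambda) = \lambda\bigl(-\lambda^2 + \tr(\DC)\,\lambda + A\bigr)$, and then $\tr(\DC)=0$ reduces the bracket to $-\lambda^2 + A$. Since $A<0$, the equation $\lambda^2 = A$ has the two conjugate solutions $\pm\operatorname{i}\sqrt{-A}$, so the spectrum of $\DC$ is $\{\,0,\ \operatorname{i}\sqrt{-A},\ -\operatorname{i}\sqrt{-A}\,\}$, that is, one null eigenvalue and a single purely imaginary conjugate pair.

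Next I would check the two structural requirements of Theorem \ref{teo1} for these eigenvalues. Because $A<0$ forces $\sqrt{-A}>0$, the three eigenvalues are pairwise distinct; a $3\times 3$ matrix with three distinct eigenvalues is diagonalizable over $\C$, so each eigenvalue --- in particular the nonnull pair $\pm\operatorname{i}\sqrt{-A}$ --- is semisimple in the sense fixed in Section 3. There is only one conjugate pair, hence the only quotient to be tested is $\sqrt{-A}/\sqrt{-A}=1$, which is rational. Thus the nonnull eigenvalues of $\DC$ are $\pm\alpha_1\operatorname{i}$ with $\alpha_1=\sqrt{-A}$, semisimple and with rational quotient, and Theorem \ref{teo1} produces a periodic orbit $\varphi_t(g)$ for every $g$ that is not a fixed point.

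The arithmetic above is routine; the step that should be stated with care is the semisimplicity, which here is not assumed but automatic, following from the distinctness of the three roots as soon as $A\neq 0$. The one place where the specific algebra $\fg_{3,6}$ --- rather than the generic $3\times 3$ bookkeeping --- genuinely intervenes is the nonvacuity of the statement: I would verify against Table 1 of \cite{biggs} that the derivation relations (\ref{sys1}) for this type actually admit an entry assignment with $\tr(\DC)=\det(\DC)=0$ and $A<0$, since only then is the hypothesis satisfiable and the conclusion meaningful. I expect that feasibility check, and not the eigenvalue computation, to be the real obstacle.
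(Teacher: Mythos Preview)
Your argument is essentially the paper's own: write the characteristic polynomial $p(\lambda)=-\lambda^3+\tr(\DC)\lambda^2+A\lambda+\det(\DC)$, impose $\det(\DC)=0$ and $\tr(\DC)=0$ to obtain the spectrum $\{0,\pm\sqrt{A}\}$, and invoke Theorem~\ref{teo1} when $A<0$; you are in fact more careful than the paper, which does not explicitly verify semisimplicity or the rational-quotient condition, both of which you correctly dispatch via distinctness of the three roots. Your closing worry about feasibility is moot: despite the label $\fg_{3,6}$ in the statement, the proposition sits in the paper's subsection on the abelian type $3\fg_1$, where the system (\ref{sys1}) is vacuous and every $3\times 3$ matrix is a derivation, so the hypotheses are trivially satisfiable.
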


	Now let $X$ be a invariant vector field. Since $3 \fg_1$ is abelian, it follows that $\ad(X) = 0$. Consequently, $\DC = - \ad(X) = 0$. From Theorem \ref{teo2} we conclude that the invariant flow $\exp(tX)$ is not periodic. It means that the existence of periodic orbits for linear flow does not imply the existence of  periodic orbits for invariant flow. 
	
\subsection{Type $\fg_{2,1} \oplus \fg_1$}
	If a Lie algebra $\fg \cong \fg_{2,1} \oplus \fg_1$, then the semisimple Lie group $G$ is isomorphic to $Aff(\R)_0 \times \R$. In this case, the Lie bracket (\ref{colchetes}) is characterized by $a = 1$, $n_1 = 1$, $n_2 = -1$, and $n_3 = 0$. From linear system (\ref{sys1}) it follows that the matrix of any derivation is 
	\[
		\DC = 
		\left(
		\begin{array}{c c c}
			x_1 & x_2 & x_3 \\
			x_2 & x_1 & y_3 \\
			0 & 0 & 0 
		\end{array}
		\right),
	\]
	and its eigenvalues are $\left\{0,x_1-x_2,x_1+x_2\right\}$. 
	\begin{proposition}
		Any linear flow on a Lie group with Lie algebra of type $\fg_{2,1} \oplus \fg_1$ do not have periodic orbits.
	\end{proposition}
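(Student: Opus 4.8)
The plan is to reduce the statement to Corollary~\ref{cor1} by inspecting the spectrum of an arbitrary derivation of $\fg_{2,1}\oplus\fg_1$. First I would justify the matrix form displayed just above the proposition: substituting the structure constants $a=1$, $n_1=1$, $n_2=-1$, $n_3=0$ into the linear system (\ref{sys1}) and solving it is a short linear-algebra exercise, leaving a derivation of the indicated shape (third row zero, top-left $2\times2$ block symmetric with equal diagonal entries). Expanding the characteristic polynomial along the zero row then gives $p(\lambda)=-\lambda\big((x_1-\lambda)^2-x_2^2\big)=-\lambda\big(\lambda-(x_1-x_2)\big)\big(\lambda-(x_1+x_2)\big)$, so the spectrum of $\DC$ is $\{0,\;x_1-x_2,\;x_1+x_2\}$, which consists entirely of real numbers for every admissible choice of the parameters $x_1,x_2,x_3,y_3\in\R$.

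Second, let $\XC$ be an arbitrary linear vector field on a connected Lie group $G$ whose Lie algebra is of type $\fg_{2,1}\oplus\fg_1$, with flow $\varphi_t$ and associated derivation $\DC=-\ad(\XC)$. By the previous paragraph $\DC$ has only real eigenvalues, so Corollary~\ref{cor1} applies directly: the flow $\varphi_t$ admits no periodic orbit through a point that is not a fixed point. Since in this paper a periodic orbit is understood not to be a fixed point, this is precisely the claim, and the proposition follows.

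I do not expect a genuine obstacle here; the result is an immediate consequence of the eigenvalue computation carried out just before the statement together with Corollary~\ref{cor1}. The only two points deserving a line of care are (i) checking that solving (\ref{sys1}) for these structure constants genuinely forces the displayed form of $\DC$ rather than a more general one, and (ii) noting that Corollary~\ref{cor1} is available in this setting — for the simply connected group $Aff(\R)_0\times\R$ every derivation in fact arises from a linear vector field by Remark~\ref{rem1}, but for the proposition we only need the automatic fact that the derivation attached to a given linear vector field is one of those classified by (\ref{sys1}).
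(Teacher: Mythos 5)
Your argument is correct and matches the paper's (implicit) reasoning exactly: the paper states the proposition immediately after displaying the derivation matrix and its real spectrum $\{0, x_1-x_2, x_1+x_2\}$, leaving the appeal to Corollary~\ref{cor1} unstated, and your proposal simply makes that appeal explicit. No differences in approach worth noting.
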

%	\[
%		\DC = 
%		\left(
%		\begin{array}{c c c}
%			 0   & x_2 & x_3 \\
%			x_2  & y_2 & y_2 \\
%			-x_3 & y_2 & y_2 
%		\end{array}
%		\right)
%	\]
%	and its characteristic polynomial of $\DC$ is $p(\lambda) = -\lambda ^3 +2 y_2 \lambda^2 + (x_2^2-y_2^2)\lambda + (y_2^2 -x_2^2)d$. It is simple to see that Girard equations of $p(\lambda)$ are
%	\begin{eqnarray*}
%		r_1+r_2 +r_3 & = & 2y_2 \\
%		r_1\cdot r_2 + r_1 \cdot r_3 + r_2 \cdot r_3 & = & x_3^2 -x_3^2 \\
%		r_1 \cdot r_2 \cdot r_3 & = & (x_3^2 - x_2^2) \cdot y_2.
%	\end{eqnarray*}
%	Suppose that a linear flow has periodic orbit, from Theorem \ref{teo1} we must have $r_1 =0$. It implies that $(x_3^2 - x_2^2) \cdot y_2 = 0$. If would $y_2 \neq 0$, then $x_3^2 -x_2^2 = 0$. It entails that $r_2 \cdot r_3 = 0$. So $r_2 = 0$ or $r_3 = 0$. Hence, for example, we have $r_1 = r_2 = 0 $ and $r_3 = 2y_2$, which is a contradiction by Corollary \ref{correaleigenvalue}.  So $y_2 =0$. It implies that
%	\[
%		\DC = 
%		\left(
%		\begin{array}{c c c}
%			 0   & x_2 & x_3 \\
%			x_2  & 0  & 0 \\
%			-x_3 & 0  & 0 
%		\end{array}
%		\right)
%	\]
%	Eigenvalues of derivation above are $\left\{0,-\sqrt{b^2-c^2},\sqrt{b^2-c^2}\right\}$. Now, a characterization of periodic orbits for linear flow is obtained from Theorem \ref{teo1}. 
%	\begin{proposition}
%		Let $\varphi_t$ be a linear flow on a Lie group with Lie algebra of type $\fg_{2,1} \oplus \fg_1$. Then $\varphi_t$ has periodic orbit if and only if $y_2= 0$ and $|x_2|<|x_3|$.
%	\end{proposition}
	
\subsubsection{Type $\fg_{3,1}$}
	The simply connected, matrix group with Lie algebra of type $\fg_{3,1}$ is isomorphic to Heisenberg group $H_3$. In this case, we adopt $a=0$, $n_1 = 1$, $n_2 = 0$, and $n_3 = 0$. From linear system (\ref{sys1}) we see that the matrix of any derivation is written as 
	\[
		\DC = 
		\left(
		\begin{array}{c c c}
			 y_2+z_3 & x_2 & x_3 \\
			 0  & y_2 & y_3 \\
			 0 & z_2 & z_3 
		\end{array}
		\right).
	\]
	The eigenvalues of derivation $\DC$ are  
	\[
		\begin{array}{cc}
		\left\{y_2+z_3, \right. &\frac{1}{2} \left(y_2+z_3 -\sqrt{(y_2- z_3)^2+4 x_3z_2}\right) ,\\
		    & \left. \frac{1}{2} \left(y_2+z_3+\sqrt{(y_2- z_3)^2+4 x_3z_2}\right)\right\}. 
		\end{array}
	\]
%	By Theorem \ref{teo1}, if $\varphi_t$ has periodic orbits we must have $y_2+ z_3= 0$ and $(y_2-z_3)^2 + 4x_3z_2<0$. It implies that derivation can be written as 
%	\begin{equation}\label{derH}
%		\DC = 
%		\left(
%		\begin{array}{c c c}
%			 0  & x_2 & x_3 \\
%			 0  & y_2 & y_3 \\
%			 0 & z_2 & -y_2 
%		\end{array}
%		\right).
%	\end{equation}
	
	\begin{proposition}
		Let $\varphi_t$ be a linear flow on a simply connected Lie group with Lie algebra of type $\fg_{3,1}$. Then $\varphi_t$ has periodic orbit if $y_2+z_3 = 0$ and  and $(y_2-z_3)^2+4x_3z_2 <0 $.
\end{proposition}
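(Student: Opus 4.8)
The plan is to obtain this as a direct specialization of Theorem \ref{teo1}, using the eigenvalues of $\DC$ already computed for the type $\fg_{3,1}$ case. First I would impose the condition $y_2 + z_3 = 0$; this makes the first listed eigenvalue of $\DC$ vanish, so that the remaining two eigenvalues reduce to $\lambda_{\pm} = \pm\frac{1}{2}\sqrt{(y_2-z_3)^2 + 4x_3 z_2}$. Next, under the second hypothesis $(y_2-z_3)^2 + 4x_3 z_2 < 0$, the radicand is negative, so putting $\alpha_1 = \frac{1}{2}\sqrt{-\big((y_2-z_3)^2 + 4x_3 z_2\big)} > 0$ we get $\lambda_{\pm} = \pm\alpha_1 \operatorname{i}$, a purely imaginary conjugate pair with $\alpha_1 \neq 0$. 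Thus the nonnull eigenvalues of $\DC$ are exactly $\pm\alpha_1\operatorname{i}$.

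Then I would verify the two conditions demanded by Theorem \ref{teo1}. Since there is only one such conjugate pair (i.e. $r = 1$), the rational-quotient requirement $\alpha_i/\alpha_j \in$ the rationals holds trivially, as $\alpha_1/\alpha_1 = 1$. For semisimplicity, observe that $\DC$ is a $3\times 3$ real matrix whose three eigenvalues $0, \alpha_1\operatorname{i}, -\alpha_1\operatorname{i}$ are pairwise distinct; hence $\DC$ is diagonalizable over $\mathbb{C}$, and in particular $\pm\alpha_1\operatorname{i}$ (and $0$) are semisimple. Finally, since $(y_2-z_3)^2 \geq 0$ while the radicand is strictly negative, we must have $x_3 z_2 \neq 0$, so $\DC \neq 0$; therefore $e^{t\DC}$ is not the identity for some $t$, which by Proposition \ref{linearproperties} forces $\varphi_t \neq \id_G$, so $\varphi_t$ does possess orbits that are not fixed points. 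Applying Theorem \ref{teo1} then yields a periodic orbit through every such non-fixed point, completing the argument.

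I do not anticipate a genuine obstacle here: the statement is an instance of Theorem \ref{teo1}, and the only work is the elementary sign bookkeeping for the radicand together with the remark that distinctness of the eigenvalues delivers semisimplicity. The one point worth making explicit is that the two hypotheses force $\DC \neq 0$, so that the "not a fixed point" clause in the conclusion of Theorem \ref{teo1} is not vacuous.
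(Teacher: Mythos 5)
Your proposal is correct and follows essentially the same route as the paper, which (as with the $\Sl(2,\R)$ case) simply invokes Theorem \ref{teo1} with the eigenvalues already computed for the $\fg_{3,1}$ derivation. Your additional checks — semisimplicity from distinctness of the three eigenvalues, the trivial rational-quotient condition for $r=1$, and the observation that the hypotheses force $\DC\neq 0$ so that non-fixed points exist — are exactly the details the paper leaves implicit.
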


\subsection{Type $\fg_{3,2}$}
	The simply connected Lie group $G$ with Lie algebra $\fg \cong \fg_{3,2}$ is isomorphic to $G_{3,2}$. The Lie bracket is given by $a=1$, $n_1 = 1$, $n_2 = 0$, and $n_3 = 0$. It follows, by linear system (\ref{sys1}), that the matrix of any derivation is written as
	\[
		\DC = 
		\left(
		\begin{array}{c c c}
			 0 & x_2 & x_3 \\
			 0 & 0   & y_3 \\
			 0 & 0   & 0 
		\end{array}
		\right),
	\]
	and its eigenvalues are $\{0,0,0\}$.
	\begin{proposition}
		Let $\varphi_t$ be a linear flow on a simply connected Lie group with Lie algebra of type $\fg_{3,2}$. Then $\varphi_t$ do not have periodic orbits.
	\end{proposition}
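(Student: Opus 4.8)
The statement will follow almost immediately from Corollary \ref{cor1}, together with the description of the derivations of $\fg_{3,2}$ recorded just above. The plan is to first pin down the spectrum of an arbitrary derivation, and then quote the earlier result.

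\textbf{Step 1: the spectrum of $\DC$ is $\{0,0,0\}$.} For type $\fg_{3,2}$ the structure constants in \eqref{colchetes} are $a=1$, $n_1=1$, $n_2=0$, $n_3=0$. Substituting these into the linear system \eqref{sys1}, the equations with $n_2$ or $n_3$ as a factor become trivial, and the surviving ones ($z_1+z_2=0$, $-z_1=0$, $y_1-z_3=0$, $x_1-y_2-z_3=0$, $y_1+z_3=0$, $-y_2=0$) force $x_1=y_1=y_2=z_1=z_2=z_3=0$, leaving only $x_2,x_3,y_3$ free. Thus in the basis $(E_1,E_2,E_3)$ every derivation $\DC$ is strictly upper triangular, exactly the matrix displayed before the proposition; its characteristic polynomial is $p(\lambda)=-\lambda^3$, so its only eigenvalue is $0$, and in particular all eigenvalues of $\DC$ are real.

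\textbf{Step 2: apply Corollary \ref{cor1}.} Since $\DC$ has only real eigenvalues, Corollary \ref{cor1} says that the linear flow $\varphi_t$ associated to $\DC$ has no periodic orbit outside its fixed-point set, which is precisely the assertion. I would emphasize that it is irrelevant whether $\DC$ itself vanishes: the entries $x_2,x_3,y_3$ may well be nonzero, but Corollary \ref{cor1} only requires the spectrum to be real. Internally it proceeds by contradiction via Proposition \ref{prop1}: a periodic orbit would force the unique real eigenvalue $0$ to be semisimple, hence $\DC=0$, hence $(d\varphi_t)_e=e^{t\DC}=I_\fg$ and (by connectedness) $\varphi_t=\id$, so that every point is fixed --- the desired contradiction.

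There is essentially no obstacle here, since all of the work has already been done in Proposition \ref{prop1} and Corollary \ref{cor1}. The only point requiring a moment's care is the reduction of \eqref{sys1}: one must notice that the pair $y_1-z_3=0$ and $y_1+z_3=0$ together kill both $y_1$ and $z_3$, after which $x_1-y_2-z_3=0$ and $y_2=0$ give $x_1=0$ as well. Everything else is mechanical, and the conclusion is immediate.
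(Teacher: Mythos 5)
Your proposal is correct and follows exactly the route the paper intends: the paper states no explicit proof for this proposition, but the displayed strictly upper triangular derivation with spectrum $\{0,0,0\}$ followed by an appeal to Corollary \ref{cor1} is precisely the argument, and your reduction of the system \eqref{sys1} with $a=1$, $n_1=1$, $n_2=n_3=0$ checks out. Nothing further is needed.
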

	
\subsection{Type $\fg_{3,3}$}
	The simply connected Lie group $G$ with Lie algebra of $\fg \cong \fg_{3,3}$ is isomorphic to $G_{3,3}$ and its Lie bracket is given by $a = 1$, $n_1 = 0$, $n_2 = 0$, and $n_3 = 0$. From linear system (\ref{sys1}) we deuce that the matrix of any derivation is given by
	\[
		\DC = 
		\left(
		\begin{array}{c c c}
			 x_1 & x_2 & x_3 \\
			 y_1 & y_2 & y_3 \\
			 0 & 0   & 0 
		\end{array}
		\right),
	\]
	and its eigenvalues are 
	\[
		\left\{0,\frac{1}{2} \left(x_1+y_2-\sqrt{(x_2-y_2)^2+4 x_2 y_1}\right),\frac{1}{2} \left(x_1+y_2+\sqrt{(x_2-y_2)^2+4 x_2 y_1}\right)\right\}.
	\] 
%	By Theorem \ref{teo1}, if $\varphi_t$ has periodic orbits, we must have $x_1+y_2= 0$ and $(x_2-y_2)^2+4 x_2 y_1<0$. It implies that derivation can be written as 
%	\begin{equation}\label{der33}
%		\DC = 
%		\left(
%		\begin{array}{c c c}
%			 -y_2 & x_2 & x_3 \\
%			 y_1 & y_2 & y_3 \\
%			 0 & 0   & 0 
%%		\end{array}
%		\right),
%	\end{equation}
	\begin{proposition}
		Let $\varphi_t$ be a linear flow on a simply connected Lie group with Lie algebra of type $\fg_{3,3}$. Then $\varphi_t$ has periodic orbit if $x_1+y_2= 0$ and $(x_2-y_2)^2+4 x_2 y_1<0$.
\end{proposition}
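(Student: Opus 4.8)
The plan is to deduce the proposition directly from Theorem~\ref{teo1}, feeding it the eigenvalue list of $\DC$ computed just above. Recall that for type $\fg_{3,3}$ every derivation has the displayed upper-block-triangular form, with spectrum
\[
	\left\{ 0,\ \frac{1}{2}\left(x_1+y_2-\sqrt{(x_2-y_2)^2+4x_2y_1}\right),\ \frac{1}{2}\left(x_1+y_2+\sqrt{(x_2-y_2)^2+4x_2y_1}\right) \right\}.
\]
First I would impose $x_1+y_2=0$, which cancels the common real part of the last two eigenvalues, leaving them equal to $\pm\frac{1}{2}\sqrt{(x_2-y_2)^2+4x_2y_1}$. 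Then, since $(x_2-y_2)^2+4x_2y_1<0$, I would set $\alpha=\frac{1}{2}\sqrt{-\left((x_2-y_2)^2+4x_2y_1\right)}>0$, so that the two non-null eigenvalues of $\DC$ are precisely $\pm\alpha\operatorname{i}$.

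It then remains to check the two remaining hypotheses of Theorem~\ref{teo1}. Semisimplicity of $\pm\alpha\operatorname{i}$ is immediate: because the inequality is strict, these two values are distinct from each other and from $0$, so they are simple roots of the characteristic polynomial of the $3\times 3$ matrix $\DC$, and a simple eigenvalue has a one-dimensional Jordan block, hence is semisimple. The rational-quotient condition is vacuous, since there is a single conjugate pair $\pm\alpha_1\operatorname{i}$ and $\alpha_1/\alpha_1=1$ is rational. With both spectral conditions verified, Theorem~\ref{teo1} provides a $T>0$ with $e^{T\DC}=Id$, and hence a periodic orbit of $\varphi_t$ through every point of $G$ that is not a fixed point.

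I do not expect a genuine obstacle here; the work is simply the verification of the spectral hypotheses of Theorem~\ref{teo1}. The two points that deserve a moment's attention are: (i) that $\alpha\neq0$, which is exactly where the \emph{strict} inequality is needed --- were the discriminant zero, $\DC$ could be nilpotent and Corollary~\ref{cor1} would instead forbid periodic orbits; and (ii) that the eigenvalue $0$ creates no difficulty, since Theorem~\ref{teo1} only constrains the non-null eigenvalues and the Jordan block at $0$ contributes only a constant factor to $e^{t\DC}$.
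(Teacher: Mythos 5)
Your argument is correct and is essentially the paper's own: the paper simply invokes Theorem~\ref{teo1} on the displayed eigenvalue list (as it does explicitly for $\Sl(2,\R)$), and your checks of semisimplicity (the strict inequality makes the two imaginary roots simple) and of the vacuous rational-quotient condition merely fill in details the paper leaves implicit. One incidental remark: the characteristic polynomial of the upper $2\times 2$ block of $\DC$ has discriminant $(x_1-y_2)^2+4x_2y_1$, so the expression $(x_2-y_2)^2+4x_2y_1$ appearing in the paper's eigenvalue list and in the statement is very likely a typo that your proof inherits verbatim.
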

	
\subsection{Type $\fg^0_{3,4}$}
	In this class, any simply connected Lie group $G$ with Lie algebra $\fg \cong \fg^0_{3,4}$ is isomorphic to $SE(1,1)$. Furthermore, Lie bracket is obtained by $a = 0$, $n_1 = 1$, $n_2 = -1$, and $n_3 = 0$. From linear system (\ref{sys1}) we see that the matrix of any derivation is 
	\[
		\DC = 
		\left(
		\begin{array}{c c c}
			x_1 & x_2& x_3 \\
			x_2 & x_1& y_3 \\
			0   & 0  & 0 
		\end{array}
		\right),
	\]
	and its eigenvalues are $\left\{0,x_1-x_2,x_1+x_2\right\}$. 
	\begin{proposition}
		Any linear flow on a Lie group with Lie algebra of type $\fg^0_{3,4}$ do not have periodic orbits.
	\end{proposition}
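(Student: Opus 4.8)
The plan is to reduce the statement to the already-established framework of Corollary \ref{cor1}. First I would observe that $SE(1,1)$ is a connected, simply connected Lie group, so by Remark \ref{rem1} there is a one-to-one correspondence between derivations of $\fg^0_{3,4}$ and linear vector fields on $SE(1,1)$; thus it suffices to analyze the eigenvalues of an arbitrary derivation $\DC$ on $\fg^0_{3,4}$. The display immediately preceding the statement already records that, after solving the linear system (\ref{sys1}) with $a=0$, $n_1=1$, $n_2=-1$, $n_3=0$, the matrix of a general derivation is the $3\times 3$ matrix with rows $(x_1,x_2,x_3)$, $(x_2,x_1,y_3)$, $(0,0,0)$, and that its eigenvalues are $\{0,\ x_1-x_2,\ x_1+x_2\}$.

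The key step is then simply to note that all three eigenvalues $0$, $x_1-x_2$, $x_1+x_2$ are real for every choice of the real parameters $x_1,x_2,x_3,y_3$. Hence $\DC$ has only real eigenvalues, and Corollary \ref{cor1} applies directly: the linear flow $\varphi_t$ associated to $\DC$ has no periodic orbits (other than fixed points, which are excluded from consideration). Since $\DC$ was an arbitrary derivation, no linear flow on a Lie group with Lie algebra $\fg^0_{3,4}$ admits periodic orbits. For completeness I would also remark — though it is not strictly needed — that one could verify the eigenvalue computation directly from the block-triangular structure of the matrix (the last row being zero forces $0$ as an eigenvalue, and the remaining $2\times 2$ block $\begin{pmatrix} x_1 & x_2 \\ x_2 & x_1 \end{pmatrix}$ is symmetric, hence has real spectrum $x_1\pm x_2$), which reconfirms reality of the spectrum without relying on the closed-form radical expressions.

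I do not anticipate a genuine obstacle here: the proof is a one-line invocation of Corollary \ref{cor1} once the eigenvalue list is in hand. The only point requiring a little care is the logical quantifier — the claim is about \emph{every} linear flow, so one must make explicit that the derivation matrix displayed is the general form (which follows from the fact that derivations are inner and from system (\ref{sys1})), and not merely one example. Another minor subtlety worth a sentence is to recall, via the discussion following Theorem \ref{teo2}, that on this group every derivation is indeed realized by a linear vector field, so the conclusion about derivations transfers faithfully to a conclusion about linear flows. Thus the proof reads: the eigenvalues of any derivation of $\fg^0_{3,4}$ are the real numbers $0$, $x_1-x_2$, $x_1+x_2$; by Corollary \ref{cor1} the associated linear flow has no non-fixed periodic orbit; since the derivation was arbitrary, we are done.
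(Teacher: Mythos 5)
Your proposal is correct and follows exactly the route the paper intends: the general derivation on $\fg^0_{3,4}$ has only the real eigenvalues $0$, $x_1-x_2$, $x_1+x_2$, so Corollary \ref{cor1} rules out periodic orbits. The paper leaves this step implicit (it states the eigenvalues and the proposition without writing out the one-line deduction), and your added remarks on the symmetric $2\times 2$ block and on the derivation-to-flow correspondence are sound but not substantively different.
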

	
\subsection{Type $\fg^a_{3,4}$}
	Here, we have a family of Lie algebra of type $\fg^a_{3,4}$ given by conditions $a> 0$ and $a\neq 1$, $n_1 = 1$, $n_2 = -1$, and $n_3 = 0$. Furthermore, a simply connected Lie group $G$ with Lie algebra $\fg^a_{3,4}$ is isomorphic to $G^{a}_{3,4}$. By linear system (\ref{sys1}), the matrix of any derivation is written as
	\[
		\DC = 
		\left(
		\begin{array}{c c c}
			 -y_2 & ay_2 & x_3 \\
			 ay_2 & y_2 & y_3 \\
			 0 & 0 & 0 
		\end{array}
		\right),
	\]
	which yields as eigenvalues $\left\{0,-\sqrt{(1+a)y_2^2},\sqrt{(1+a)y_2^2}\right\}$.  
	\begin{proposition}
		For $a>0$ and $a\neq 1$, any linear flow on a simply connected Lie group with Lie algebra of type $\fg^a_{3,4}$ do not have periodic orbits.
	\end{proposition}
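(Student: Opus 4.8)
The plan is to reduce the statement to Corollary \ref{cor1} by verifying that, for $a>0$, every derivation of a Lie algebra of type $\fg^a_{3,4}$ has only real eigenvalues. First I would substitute the structure constants $a>0$ (with $a\neq 1$), $n_1=1$, $n_2=-1$, $n_3=0$ into the linear system (\ref{sys1}) and solve it for $x_1,\dots,z_3$; this produces the derivation matrix displayed above,
\[
	\DC =
	\left(
	\begin{array}{c c c}
		 -y_2 & ay_2 & x_3 \\
		 ay_2 & y_2 & y_3 \\
		 0 & 0 & 0
	\end{array}
	\right),
\]
with eigenvalues $\left\{0,-\sqrt{(1+a)y_2^2},\sqrt{(1+a)y_2^2}\right\}$, exactly as recorded.

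Next I would note that since $a>0$ we have $1+a>0$, so $(1+a)y_2^2\ge 0$ and hence $\pm\sqrt{(1+a)y_2^2}=\pm|y_2|\sqrt{1+a}\in\R$. Consequently all three eigenvalues of $\DC$ are real, for every choice of the free parameters $x_3$, $y_3$, $y_2$ (and the extra restriction $a\neq 1$ in the definition of this family plays no role in the argument). Corollary \ref{cor1} then applies directly and yields that the linear flow $\varphi_t$ associated to $\DC$ has no periodic orbit other than a fixed point. The degenerate subcase $y_2=0$ is covered as well: there $\DC$ is nilpotent, and Corollary \ref{cor1} forces $\DC=0$, whence $\varphi_t=I_G$ and every point is fixed.

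I do not anticipate a real obstacle. The only slightly laborious point is checking that (\ref{sys1}) with these parameter values indeed collapses to the displayed two-parameter family of matrices, which is a routine elimination; and the only conceptual point worth flagging is that it is precisely the sign hypothesis $a>0$, in contrast to the general real parameter permitted for other members of the $\fg_{3,4}$ series, that makes the radicand nonnegative and thereby rules out purely imaginary eigenvalues, which is what the hypothesis of Theorem \ref{teo1} would require.
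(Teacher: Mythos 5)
Your proof is correct and follows essentially the same route as the paper: solve (\ref{sys1}) with $a>0$, $n_1=1$, $n_2=-1$, $n_3=0$ to get the displayed derivation, observe that $1+a>0$ makes the radicand $(1+a)y_2^2$ nonnegative so all eigenvalues are real, and invoke Corollary \ref{cor1}. The separate treatment of $y_2=0$ is unnecessary (Corollary \ref{cor1} already covers the all-eigenvalues-zero case) and your remark that it ``forces $\DC=0$'' is not right --- a nilpotent $\DC$ with $x_3$ or $y_3$ nonzero is nonzero --- but this does not affect the validity of the main argument.
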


%\subsubsection{Type $\fg^0_{3,5}$ (The Euclidean Group and its Covers)}
%	A Lie group with Lie algebra of type $\fg^0_{3,5}$ is isomorphic to the Euclidean group $SE(2)$, the n-fold covering $SE_n(2)$ of $SE_1(2) = SE (2)$, or the universal covering group $\tilde{SE}(2)$. Lie algebra $\fg^0_{3,5}$ is given by Lie bracket with $a=0$, $n_1 = 1$, $n_2 = 1$ and $n_3 = 0$. It follows that the matrix of any derivation is 
%	\[
%		\DC = 
%		\left(
%		\begin{array}{c c c}
%			x_1 & 0  & x_3 \\
%			0   & x_1& y_3 \\
%			0   & 0  & 0 
%		\end{array}
%		\right)
%	\]
%	and its eigenvalues are $\{0,x_1,x_1\}$. 
%	\begin{proposition}
%		Any linear flow on a Lie group with Lie algebra of type $\fg^0_{3,5}$ do not have periodic orbits.
%	\end{proposition}

\subsection{Type $\fg^a_{3,5}$}
	A family of Lie algebra $\fg^a_{3,5}$ is characterize by $a> 0$, $n_1 = 1$, $n_2 = 1$, and $n_3 = 0$. In this case, simply connected Lie groups with Lie algebra are isomorphic to $G^a_{3,5}$. 
	Solving linear system (\ref{sys1}) we can write the matrix of any derivation as 
	\[
		\DC = 
		\left(
		\begin{array}{c c c}
			 y_2 & -ay_2 & x_3 \\
			 ay_2 & -ay_2 & y_3 \\
			 0 & 0 & 0 
		\end{array}
		\right),
	\]
	which yields as eigenvalues 
	\begin{eqnarray*}
		\{0, &\frac{1}{2} \left((-a-1)y_2-\sqrt{-2 ay_2 y_2-3 ay_2^2+y_2^2}\right), \\
							& \frac{1}{2} \left((-a-1)y_2+ \sqrt{-2 ay_2 y_2-3ay_2^2+y_2^2}\right)\}.
	\end{eqnarray*}
	By Theorem \ref{teo1}, we must have $ (-a-1)y_2 = 0$ for the linear flow $\varphi_t$ to have periodic orbits. Since $a >0$, it implies that $-a-1 \neq 0$. We thus get $y_2 = 0$.
%	ii) $a =1$\\
%	From Theorem \ref{teo1} we can written the matrix of any derivation as 
%	\[
%		\DC = 
%		\left(
%		\begin{array}{c c c}
%			 -x_2 & x_2 & x_3 \\
%			 -x_2 & -x_2 & y_3 \\
%			 0 & 0 & 0 
%		\end{array}
%		\right)
%	\]
%	and its values are $\left\{0,-x_2-x_2i ,-x_2+x_2i\right\}$. 
%	By Theorem \ref{teo1}, if $\varphi_t$ has periodic orbit, we must have $ x_2 = 0$.
	
%	In resume, both cases gives the same result. 
	
	\begin{proposition}
		Any linear flow on a simply connected Lie group with Lie algebra of type $\fg^a_{3,5}$ for some $a>0$ do not have periodic orbits.
	\end{proposition}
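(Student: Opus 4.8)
The plan is to combine the eigenvalue computation already displayed above with Theorem~\ref{teo1} and Corollary~\ref{cor1}, so that the whole question collapses to a single constraint on the parameter $y_2$.

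First I would recall that, from the discussion preceding the statement, every derivation $\DC$ of a Lie algebra of type $\fg^a_{3,5}$ has the displayed form, with $0$ as one eigenvalue and the other two eigenvalues being the roots of the characteristic polynomial of the $2\times 2$ upper-left block, whose common real part equals $\tfrac12(-a-1)y_2$. Now split into two cases according to the sign of the discriminant. If the discriminant is nonnegative, the two remaining eigenvalues are real, so $\DC$ has only real eigenvalues and Corollary~\ref{cor1} immediately gives that the associated linear flow $\varphi_t$ has no periodic orbit other than fixed points.

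If instead the discriminant is negative, the two remaining eigenvalues are genuine complex conjugates with real part $\tfrac12(-a-1)y_2$. By Theorem~\ref{teo1} (and the consequence noted right after its proof, that a derivation with an eigenvalue of nonzero real part admits no periodic orbit), a periodic orbit that is not a fixed point forces this real part to vanish; since the family $\fg^a_{3,5}$ has $a>0$, we have $-a-1\neq 0$, hence $y_2=0$. But substituting $y_2=0$ into the derivation matrix leaves a strictly upper-triangular, hence nilpotent, matrix, so all eigenvalues of $\DC$ are $0$; applying Corollary~\ref{cor1} once more yields no periodic orbits. Combining the two cases proves the proposition.

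I do not expect a genuine obstacle here: the argument is a direct bookkeeping exercise on the eigenvalues. The only point deserving a line of care is the fully degenerate sub-case $\DC=0$ (when also $x_3=y_3=0$), but this is already covered by Corollary~\ref{cor1}, whose proof shows $\varphi_t=I_G$ so that every point is a fixed point. Hence the statement reduces to the single observation that one cannot annihilate the real parts of the nonzero eigenvalues without setting $y_2=0$, which in turn annihilates their imaginary parts as well, leaving no room for a nontrivial periodic orbit.
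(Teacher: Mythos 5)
Your proof is correct and follows essentially the same route as the paper: read off the eigenvalues of the displayed derivation matrix, observe that the nonzero ones have real part $\tfrac12(-a-1)y_2$, conclude via Theorem~\ref{teo1} (and its stated consequence) that periodicity would force $y_2=0$, and note that this makes $\DC$ nilpotent so Corollary~\ref{cor1} finishes. Your explicit case split on the sign of the discriminant is slightly more careful than the paper's terse argument, but it is the same idea.
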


{\small
}

\end{document}